\definecolor{refkey}{gray}{.75}
\definecolor{labelkey}{gray}{.2}
\newtheorem{theorem}{Theorem}[section]
\newtheorem{lemma}[theorem]{Lemma}
\newtheorem{corollary}[theorem]{Corollary}
\newtheorem*{Sid}{Sidorenko's conjecture}
\theoremstyle{definition}
\theoremstyle{remark}
\newcommand{\overbar}[1]{\mkern 1.5mu\overline{\mkern-1.5mu#1\mkern-1.5mu}\mkern 1.5mu}
\newcommand{\EE}{\mathbb{E}}
\newcommand{\HH}{\mathcal{H}}
\newcommand{\GG}{\mathcal{G}}
\newcommand{\FF}{\mathcal{F}}
\newcommand{\mockalph}[1]{}
\tikzstyle{p}+=[fill=black, circle, minimum width = 1pt, inner sep =
\tikzstyle{w}+=[fill=white, draw, circle, minimum width = 1pt, inner sep =
\begin{document}

\begin{frontmatter}[classification=text]

\title{Sidorenko's Conjecture for Blow-ups} 

\author[DC]{David Conlon\thanks{Supported by a Royal Society University Research Fellowship and by ERC Starting Grant 676632.}}
\author[JL]{Joonkyung Lee\thanks{Supported by ERC Consolidator Grant PEPCo 724903 and ERC Starting Grant 676632.}}

\begin{abstract}
A celebrated conjecture of Sidorenko and Erd\H{o}s--Simonovits states that, for all bipartite graphs $H$, quasirandom graphs contain asymptotically the minimum number of copies of $H$ taken over all graphs with the same order and edge density. This conjecture has attracted considerable interest over the last decade and is now known to hold for a broad range of bipartite graphs, with the overall trend saying that a graph satisfies the conjecture if it can be built from simple building blocks such as trees in a certain recursive fashion. 

Our contribution here, which goes beyond this paradigm, is to show that the conjecture holds for any bipartite graph $H$ with bipartition $A \cup B$ where the number of vertices in $B$ of degree $k$ satisfies a certain divisibility condition for each $k$. As a corollary, we have that for every bipartite graph $H$ with bipartition $A \cup B$, there is a positive integer $p$ such that the blow-up $H_A^p$ formed by taking $p$ vertex-disjoint copies of $H$ and gluing all copies of $A$ along corresponding vertices satisfies the conjecture. Another way of viewing this latter result is that for every bipartite $H$ there is a positive integer $p$ such that an $L^p$-version of Sidorenko's conjecture holds for $H$.
\end{abstract}
\end{frontmatter}

\section{Introduction}

One of the central problems in extremal graph theory is to estimate the minimum number of copies of a graph $H$ which can be contained in another graph $G$ of given order and edge density. Even when $H$ is a triangle, this problem is highly non-trivial and was only solved fully by Razborov~\cite{R07, R08} in 2008, who used it as the first test case for his influential flag algebra technique. His result was then extended to $K_4$ by Nikiforov~\cite{N11} and to all $K_r$ by Reiher~\cite{Re12} using further ideas.

Part of the difficulty in proving these results is that the behaviour of the minimum number of copies of $K_r$ as a function of the edge density is surprisingly complicated. On the other hand, when $H$ is a bipartite graph, conjectures of Erd\H{o}s and Simonovits~\cite{ESi83} and Sidorenko~\cite{Sid93} suggest that the minimum should be extremely simple, being asymptotically equal to the number of copies of $H$ in a quasirandom graph of the same density.

This attractive conjecture, usually known as Sidorenko's conjecture, is best stated in terms of homomorphisms. A \emph{homomorphism} from a graph $H$ to a graph $G$ is a mapping $f : V(H) \rightarrow V(G)$ such that $(f(u), f(v))$ is an edge of $G$ whenever $(u, v)$ is an edge of $H$. If $h_H(G)$ is the number of homomorphisms from $H$ to $G$, we write $t_H(G) = h_H(G)/|G|^{|H|}$ for the \emph{homomorphism density}, the probability that a uniform random mapping from $V(H)$ to $V(G)$ is a homomorphism. The conjecture is then as follows.

\begin{Sid}
For any bipartite graph $H$ and any graph $G$,
\[t_H(G) \geq t_{K_2}(G)^{e(H)}.\]
\end{Sid}

Sidorenko~\cite{Sid93} himself showed that the conjecture holds for some simple classes of bipartite graph, namely, complete bipartite graphs, even cycles and trees, and for bipartite graphs with at most four vertices on one side. There the matter stood for some time until work of Hatami~\cite{H10}, connecting it with a question of Lov\'asz~\cite{L08} about which graphs define norms, revived interest in the conjecture. In particular, he showed that hypercubes have a certain weak norming property and, hence, that they satisfy Sidorenko's conjecture.

The first significant breakthrough on the conjecture was made by Conlon, Fox and Sudakov~\cite{CFS10}, who used the dependent random choice technique~\cite{FS11} to show that if $H$ is a bipartite graph with a vertex which is complete to the other side, then Sidorenko's conjecture holds for $H$. As a corollary, they showed that this implies an approximate version of the conjecture. An important further advance was then made by Li and Szegedy~\cite{LSz12}, who initiated the application of entropy methods to the conjecture, at first in the guise of logarithmic convexity inequalities. In particular, they found a remarkably concise proof of the result of Conlon, Fox and Sudakov and extended this result to a more general class, which they referred to as reflection trees.

These ideas were developed further by Kim, Lee and Lee~\cite{KLL14}, who proved the conjecture for what they called tree-arrangeable graphs, and then pushed to their (seemingly) natural conclusion by Conlon, Kim, Lee and Lee~\cite{CKLL18, CKLL182} and, independently, by Szegedy~\cite{Sz15}. These works give broad classes of graphs for which Sidorenko's conjecture holds, though it is somewhat hard to do justice to these classes in this limited space. However, the overall trend is that a graph may be shown to satisfy the conjecture if it can be built from simple building blocks such as trees (or weakly norming graphs~\cite{CL16}) in a certain recursive fashion. The main result of this paper is the following, which we believe moves beyond the confines of this paradigm.

\begin{theorem}\label{thm:main}
Let $H$ be a bipartite graph with bipartition $A \cup B$, $\max_{b \in B} \deg(b) = r$ and, for each $1 \leq k \leq r$, let $d_k$ be the number of vertices with degree $k$ in $B$.
Then, if $\binom{|A|}{r}\binom{r}{k}$ divides $d_k$ for each $1\leq k\leq r$, Sidorenko's conjecture holds for $H$.
\end{theorem}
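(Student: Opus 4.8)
\medskip
\noindent\textbf{Proof strategy.}
By the usual compactness/limiting argument it suffices to prove $t_H(W)\ge p^{e(H)}$ for every graphon $W\colon[0,1]^2\to[0,1]$, where $p:=t_{K_2}(W)$, and we may assume $p>0$. My plan is to reduce, by a symmetrisation step, to a highly symmetric graph built from gadgets each having a vertex complete to one side, and then quote the theorem of Conlon--Fox--Sudakov~\cite{CFS10} (equivalently, the entropy proof of Li--Szegedy~\cite{LSz12}). The first step is to integrate over the image of $A$ before that of $B$: for $S\subseteq A$ write $g_S(x):=\int_0^1\prod_{a\in S}W(x_a,y)\,dy$, a function on $[0,1]^A$ depending only on the coordinates indexed by $S$. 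Grouping the vertices of $B$ by their degree $k$ and neighbourhood $S$, this gives
\[
t_H(W)=\int_{[0,1]^A}\prod_{k=1}^r\prod_{S\in\binom{A}{k}}g_S(x)^{d_{k,S}}\,dx,\qquad\textstyle\sum_{S\in\binom{A}{k}}d_{k,S}=d_k .
\]

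Since this integral is invariant under permuting the $A$-coordinates, I would average the integrand over all of $\operatorname{Sym}(A)$ and apply the AM--GM inequality pointwise; each $g_S$ with $|S|=k$ then acquires the common exponent $d_k/\binom{|A|}{k}$, which is an integer because $\binom{|A|}{r}\binom{r}{k}=\binom{|A|}{k}\binom{|A|-k}{r-k}$ divides $d_k$. This yields
\[
t_H(W)\ \ge\ \int_{[0,1]^A}\prod_{k=1}^r\prod_{S\in\binom{A}{k}}g_S(x)^{d_k/\binom{|A|}{k}}\,dx\ =\ t_{H^\ast}(W),
\]
where $H^\ast$ is the bipartite graph on part $A$ having, for each $k$ and each $k$-subset $S$ of $A$, exactly $m_k:=d_k/\binom{|A|}{k}$ vertices of neighbourhood $S$. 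Since $e(H^\ast)=\sum_k k\,d_k=e(H)$, it is enough to prove that $H^\ast$ satisfies the conjecture. Next, writing $c_k:=m_k/\binom{|A|-k}{r-k}$ — an integer \emph{exactly} because the full hypothesis $\binom{|A|}{r}\binom{r}{k}\mid d_k$ is now being used — one checks that $H^\ast$ is obtained by taking, for every $r$-subset $T$ of $A$, a copy of the gadget $G_T$ on $T\cup B_T$ having, for each $k\le r$ and each $S\in\binom{T}{k}$, exactly $c_k$ vertices of neighbourhood $S$, and gluing these gadgets along $A$. Because $d_r\ge1$ and $\binom{|A|}{r}\mid d_r$ we have $c_r=m_r\ge1$, so each $G_T$ contains a vertex adjacent to all of $T$; as $T$ is one side of $G_T$, each $G_T$ satisfies Sidorenko's conjecture by~\cite{CFS10,LSz12}.

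The remaining step — deducing that the glued graph $H^\ast=\bigcup_T G_T$ itself satisfies the conjecture — is the one I expect to be the main obstacle. It cannot follow from any general principle that gluing Sidorenko graphs along a common independent subset of one side preserves the Sidorenko property, since such a principle, applied to the decomposition of an arbitrary bipartite graph into stars glued along one side, would settle the whole conjecture. What must be exploited is the $\operatorname{Sym}(A)$-symmetry of $H^\ast$. One route is to run the dependent-random-choice/entropy argument of~\cite{CFS10,LSz12} directly on $H^\ast$: although $H^\ast$ has no single vertex complete to a side, it carries the symmetric family $\{b_T:T\in\binom{A}{r}\}$ of ``local'' universal vertices, and the hope is to choose a $\operatorname{Sym}(A)$-invariant law for the image of $A$ that is, on average over $T$, adapted to the gadget $G_T$, so that the entropy bookkeeping closes exactly. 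An alternative is to obtain $t_{H^\ast}(W)\ge p^{e(H^\ast)}$ through a sequence of Cauchy--Schwarz (``reflection'') steps indexed by a generating set of transpositions of $\operatorname{Sym}(A)$, reducing $H^\ast$ to a forest; the delicate point there is that the natural separating set for a transposition need not be independent, and arranging independence is precisely where the symmetry, together with the divisibility forcing the multiplicities $m_k$ to be large enough to redistribute, is needed. Making either route rigorous is the technical heart of the proof; everything before it is routine.
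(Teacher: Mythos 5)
Your first step --- the symmetrisation via $\operatorname{Sym}(A)$ and H\"{o}lder/AM--GM, reducing $t_H(W)\ge t_{H^\ast}(W)$ where $H^\ast$ is the ``$\alpha$-weighted'' $(|A|,r)$-downset graph --- is exactly the paper's Hölder trick (Theorem~\ref{thm:Holder}), and your observation that $H^\ast$ is the union over $T\in\binom{A}{r}$ of gadgets $G_T$ each having a vertex complete to $T$ matches the decomposition the paper ultimately exploits. You also correctly diagnose that the conclusion cannot follow from any naive ``gluing preserves Sidorenko'' principle, and that the entire weight of the theorem rests on passing from the gadgets to $H^\ast$.

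However, you explicitly leave that passage unproven, and the two routes you sketch (a bespoke entropy argument, or Cauchy--Schwarz reflections indexed by transpositions) are not the ones that work; the second in particular runs into the problem you yourself flag, that the natural separating sets for a transposition are not independent, and there is no evident way to repair this by redistribution. The missing idea in the paper is a reformulation in terms of \emph{hypergraphs}: one encodes $t_{H^\ast}(W)=t^\alpha_J(W)$ (with $J$ the $(|A|,r)$-downset graph) as the homomorphism density $t_{\mathcal H_\alpha}(W_\alpha)$ of a carefully built $(r+1)$-partite $(r+1)$-uniform hypergraph $\mathcal H_\alpha$ in a product-type $(r+1)$-variable kernel $W_\alpha(x,z_1,\dots,z_r)=\prod_k W(x,z_k)^{q_k}$ (Lemma~\ref{lem:star_hypergraph}). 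The crucial fact is that $\mathcal H_\alpha$ is a \emph{reflection hypergraph} in the sense of~\cite{CL16} and hence \emph{weakly norming} (Lemma~\ref{lem:norming}); the divisibility hypothesis is precisely what is needed to make the multiplicities $\beta_k=\alpha_k/\binom{|A|-k}{r-k}$ integral so that this hypergraph exists. Weak norming then gives $t_{\mathcal H_\alpha}(W_\alpha)\ge t_{\mathcal G_\alpha}(W_\alpha)^{\binom{|A|}{r}}$ for the single-gadget subhypergraph $\mathcal G_\alpha$, which is exactly the step that ``glues'' your $G_T$'s --- and $t_{\mathcal G_\alpha}(W_\alpha)$ unwinds to a weighted density of the $(r,r)$-downset graph, which has a vertex complete to one side, so Conlon--Fox--Sudakov applies (Lemma~\ref{lem:G_a}). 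Without this hypergraph/weak-norming machinery your argument does not close; what you have is the correct reduction and an accurate description of the obstacle, but not a proof.
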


In the proof, critical use is made of a simple mechanism that we call the H\"older trick, which allows us to convert a graph of the type described in Theorem~\ref{thm:main} into a simpler graph to which we can apply the existing techniques. This trick was first observed in~\cite[Section 3]{CKLL18}, but was not exploited to its full potential. To illustrate this key idea, we will discuss the notorious example $K_{5,5}\setminus C_{10}$ in the next section, showing that its `square' satisfies the conjecture.

When the bipartite graph $H$ is regular on one side of the bipartition, a stronger statement, without any divisibility condition, can be obtained which already implies the aforementioned result about  $K_{5,5}\setminus C_{10}$.
\begin{theorem}\label{thm:reg}
Let $H$ be a bipartite graph with partition $A \cup B$ and $\deg(b) = r$ for all $b \in B$. Then, provided $|B| \geq \binom{|A|}{r}$, Sidorenko's conjecture holds for $H$.
\end{theorem}

Given a bipartite graph $H$ with bipartition $A \cup B$ and a positive integer $p$, its blow-up $H_A^p$, or `$p$-th power', relative to $A$ is defined to be the graph formed by taking $p$ vertex-disjoint copies of $H$ and gluing all copies of $A$ along corresponding vertices. That is, we replace each vertex in $B$ with an independent set of order $p$ and connect every vertex in $A$ that was joined to $b \in B$ to every vertex in the corresponding independent set. Since $\binom{|A|}{r}\binom{r}{k}$ divides $|A|!/(|A|-r)!$ for each $k$, a simple corollary of Theorem~\ref{thm:main}, again generalising the $K_{5,5}\setminus C_{10}$ case, is then as follows.

\begin{corollary} \label{cor:main}
For every bipartite graph $H$ with bipartition $A \cup B$, there is a positive integer $p$ such that $H_A^p$ satisfies Sidorenko's conjecture.
In particular, $p=|A|!$ always suffices.
\end{corollary}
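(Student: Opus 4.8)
\noindent\textbf{Proof proposal for Corollary~\ref{cor:main}.}
The plan is to derive the corollary directly from Theorem~\ref{thm:main} by checking that the blow-up $H_A^p$ meets the divisibility hypothesis once $p$ is chosen to be a large enough multiple of the relevant binomial coefficients. First I would record the elementary structural facts about the blow-up: $H_A^p$ is bipartite with bipartition $A \cup B'$, where $B' = B \times [p]$, and, by construction, each vertex $(b,i) \in B'$ has exactly the same neighbourhood in $A$ as $b$ has in $H$. Hence $\deg_{H_A^p}(b,i) = \deg_H(b)$ for every $b \in B$ and $i \in [p]$, so that $\max_{b' \in B'} \deg(b')$ is still $r$, the $A$-side (and in particular $|A|$) is unchanged, and for each $1 \le k \le r$ the number of degree-$k$ vertices on the $B'$-side of $H_A^p$ equals $p\,d_k$, where $d_k$ is the corresponding count for $H$.

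Next I would invoke Theorem~\ref{thm:main} for $H_A^p$: it suffices to choose $p$ so that $\binom{|A|}{r}\binom{r}{k}$ divides $p\,d_k$ for every $1 \le k \le r$. Since the $d_k$ are fixed, it is enough to make $p$ a common multiple of the numbers $\binom{|A|}{r}\binom{r}{k}$, $1 \le k \le r$. Here I would use the identity
\[
\binom{|A|}{r}\binom{r}{k} = \frac{|A|!}{(|A|-r)!\,k!\,(r-k)!},
\]
so that
\[
\frac{|A|!/(|A|-r)!}{\binom{|A|}{r}\binom{r}{k}} = k!\,(r-k)!
\]
is a positive integer; hence $\binom{|A|}{r}\binom{r}{k}$ divides $|A|!/(|A|-r)!$, and a fortiori divides $|A|!$, for every $k$. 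Taking $p = |A|!$ therefore forces $\binom{|A|}{r}\binom{r}{k} \mid p\,d_k$ for all $k$, regardless of the actual values of the $d_k$, and Theorem~\ref{thm:main} applied to $H_A^p$ gives the conclusion.

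There is no real obstacle in this argument: the substance is entirely contained in Theorem~\ref{thm:main}, and the corollary is a bookkeeping step. The only points needing a line of care are that blowing up relative to $A$ leaves the $B$-side degree sequence intact up to the uniform multiplicity $p$ (so the divisibility hypothesis scales by exactly $p$), and the above number-theoretic observation that $\binom{|A|}{r}\binom{r}{k}$ divides $|A|!/(|A|-r)!$. A smaller value of $p$, namely any common multiple of the $\binom{|A|}{r}\binom{r}{k}$ — for instance $|A|!/(|A|-r)!$ — would work equally well, but $p = |A|!$ is the clean universal choice recorded in the statement.
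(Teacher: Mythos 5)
Your proof is correct and follows exactly the same reasoning the paper sketches in the sentence preceding the corollary: the $B$-side degree-$k$ counts scale to $p\,d_k$, and $\binom{|A|}{r}\binom{r}{k}$ divides $|A|!/(|A|-r)!$ (hence $|A|!$), so taking $p=|A|!$ puts $H_A^p$ within the scope of Theorem~\ref{thm:main}. Your additional remark that $p=|A|!/(|A|-r)!$ already suffices is a valid and slightly sharper observation, but the argument is otherwise the paper's.
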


This can be viewed as saying that for any bipartite graph $H$ there is an integer $p$ such that an $L^p$-version of the conjecture holds for $H$. To see this, suppose that $|A| = m$ and identify the set $A$ with $[m] = \{1, 2, \dots, m\}$. Now, writing $x_A = (x_1, \dots, x_m)$, where $x_i \in V(G)$ for all $i = 1, 2, \dots, m$, consider the function $t_H(G; x_A)$ which counts the proportion of mappings $f$ from $V(H)$ to $V(G)$ with $f(i) = x_i$ for all $i = 1, 2, \dots, m$ which are homomorphisms. Sidorenko's conjecture for $H$ is clearly equivalent to the statement that $\mathbb{E}_{x_A} t_H(G; x_A) \geq t_{K_2}(G)^{e(H)}$, whereas Corollary~\ref{cor:main} says that for any $H$ there is a positive integer $p$ such that $\mathbb{E}_{x_A} t_H (G; x_A)^p \geq t_{K_2}(G)^{p \cdot e(H)}$.

Another interesting corollary of Theorem~\ref{thm:main} is as follows.

\begin{corollary} \label{cor:tag}
For any bipartite graph $H$, there is another bipartite graph $H'$ such that Sidorenko's conjecture holds for the disjoint union of $H$ and $H'$.
\end{corollary}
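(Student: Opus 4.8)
The plan is to produce $H'$ by completing $H$, via addition of a disjoint bipartite graph, to a graph meeting the divisibility hypotheses of Theorem~\ref{thm:main}; the key observation is that those hypotheses constrain only the degree statistics of the $B$-side, which behave additively under disjoint unions. First I would dispose of the degenerate case $e(H)=0$: then $H$ itself satisfies Sidorenko's conjecture, and since $t_{H_1\sqcup H_2}(G)=t_{H_1}(G)\,t_{H_2}(G)$ and $e(H_1\sqcup H_2)=e(H_1)+e(H_2)$, the Sidorenko property passes to disjoint unions, so one may take $H'=K_2$. Henceforth assume $r:=\max_{b\in B}\deg(b)\ge 1$, fix the bipartition $A\cup B$ of $H$, and let $d_k$ denote the number of degree-$k$ vertices of $B$, noting $d_r\ge 1$.

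Next I would fix a target left-side size $m:=|A|+r$ and set $N_k:=\binom{m}{r}\binom{r}{k}$ for $1\le k\le r$. For each such $k$ choose a nonnegative integer $d_k'\equiv -d_k\pmod{N_k}$, for instance $d_k':=N_k\lceil d_k/N_k\rceil-d_k$. Then I would build $H'$ explicitly: let its left part be $A'=\{a_1',\dots,a_r'\}$, and for each $k$ with $1\le k\le r$ add $d_k'$ new right-vertices, each joined to $\{a_1',\dots,a_k'\}$. By construction $H'$ is bipartite with parts $A'$ and some $B'$, its $B'$-side has maximum degree at most $r$, and it has precisely $d_k'$ right-vertices of degree $k$ for every $1\le k\le r$.

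Finally I would check that $\tilde H:=H\sqcup H'$, equipped with the bipartition $\tilde A\cup\tilde B$ where $\tilde A=A\sqcup A'$ and $\tilde B=B\sqcup B'$ (a genuine bipartition, as there are no edges between the two components), satisfies the hypotheses of Theorem~\ref{thm:main}. Indeed $|\tilde A|=|A|+r=m$; the maximum degree of $\tilde H$ over $\tilde B$ is still $r$, since it is attained inside $B$ and not exceeded in $B'$; and for each $1\le k\le r$ the number of degree-$k$ vertices of $\tilde B$ is $d_k+d_k'$, which is divisible by $N_k=\binom{|\tilde A|}{r}\binom{r}{k}$ by the choice of $d_k'$. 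Theorem~\ref{thm:main} then yields Sidorenko's conjecture for $\tilde H=H\sqcup H'$, completing the proof of Corollary~\ref{cor:tag}.

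The one point that genuinely requires care — and hence the main obstacle, modest as it is — is ensuring that $H'$ can realize the prescribed multiset of $B'$-degrees without raising the maximum degree above $r$; this is exactly why $A'$ is given size $r$ (so degree-$r$ vertices fit) and why $m$ is chosen to be $|A|+r$. Since one is free to replace $m$ by any larger integer, recomputing the $N_k$ and $d_k'$ accordingly, there is in fact ample slack, and the construction runs without further difficulty.
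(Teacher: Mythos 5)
Your proof is correct and takes essentially the same approach as the paper: take $|A'|=r$, so the combined left side has $|A|+r$ vertices, and pad each right-degree count $d_k$ to a multiple of the required binomial coefficient so that Theorem~\ref{thm:main} applies to $H\sqcup H'$. The paper uses the cruder modulus $(|A|+r)!$ while you use $\binom{|A|+r}{r}\binom{r}{k}$ directly, and you additionally spell out the explicit construction of $H'$ and dispose of the $e(H)=0$ degeneracy, but these are only cosmetic refinements.
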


To see this, suppose that $H$ has bipartition $A \cup B$ with $|A|=m$, $\max_{b \in B} \deg(b) = r$ and, for each $1 \leq k \leq r$, $d_k$ is the number of vertices of degree $k$ in $B$.
Let $H'$ be a bipartite graph between $A'$ and $B'$, where $|A'|=r$ and $B'$ has exactly $\lceil d_k/(m+r)!\rceil (m+r)!-d_k$ vertices with degree $k$ for each $1 \leq k \leq r$. It is then easy to check that the conditions of Theorem~\ref{thm:main} are satisfied for $H \cup H'$.

For convenience of notation, we will use the language of graphons throughout the paper. A \emph{graphon} is a symmetric measurable function $W$ from $[0,1]^2$ to $[0,1]$, where symmetric in this context means that $W(x,y) = W(y,x)$ for all $(x,y) \in [0,1]^2$. Very roughly, this may be seen as a continuous analogue of the adjacency matrix of a graph. The \emph{homomorphism density} $t_H(W)$ of a graph $H$ in a graphon $W$ is then given by 
\[t_H(W) = \EE \left[\prod_{ij \in E(H)} W(x_i, x_j)\right] = \int_{[0,1]^{v(H)}} \prod_{ij \in E(H)} W(x_i, x_j) \ d\mu^{v(H)},\]
where $\mu$ is the Lebesgue measure on $[0,1]$. Note that we will typically abbreviate integrals with expectations, as above. In this language, Sidorenko's conjecture for a given $H$ is equivalent to saying that
\[t_H(W) \geq t_{K_2}(W)^{e(H)}\]
for every graphon $W$. It is this statement that we will prove in the cases of interest.

\section{A motivating example} \label{sec:example}

We now take a closer look at the graph $M := K_{5,5} \setminus C_{10}$, the simplest graph for which Sidorenko's conjecture is not known, showing that its `square' does satisfy the conjecture. Since $M$ is vertex-transitive, we don't need to distinguish which part of the bipartition is glued and we can simply write $M^2$ rather than $M_A^2$.

The main result of this short section relates the homomorphism density of $M^2$ to the homomorphism density of another graph. To define this graph, let $\HH$ be a family of subsets of a finite set $A$. The \emph{$(A,\HH)$-incidence graph} is the bipartite graph on $A\cup \HH$ such that $a\in A$ and $F\in\HH$ are adjacent if and only if $a\in F$. For $r\leq m$, the \emph{$(m,r)$-incidence graph} is then the $(A,\HH)$-incidence graph where $A = [m]$ and $\HH=\binom{[m]}{r}$.

\begin{theorem}\label{thm:mobius^2}
Let $F_{5,3}$ be the $(5,3)$-incidence graph. Then, for every graphon $W$, $$t_{M^2}(W)\geq t_{F_{5,3}}(W).$$
\end{theorem}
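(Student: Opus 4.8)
The plan is to rewrite both homomorphism densities in a common ``weighted'' form and then compare them using the H\"older trick mentioned above. For a subset $S\subseteq[5]$ and a point $x=(x_1,\dots,x_5)\in[0,1]^5$, set $\phi_S(x)=\EE_y\prod_{i\in S}W(x_i,y)$. Identifying one side of the bipartition of $M=K_{5,5}\setminus C_{10}$ with $[5]$ in a suitable way, the five vertices of the other side have neighbourhoods $[5]\setminus\{i,i+1\}$ for $i\in\mathbb Z/5$; equivalently, these are the complements of the five edges of a $5$-cycle on $[5]$. Since forming the blow-up $M^2$ relative to that side simply duplicates each $B$-vertex, integrating out the $B$-side gives
\[
t_{M^2}(W)=\EE_{x}\prod_{i\in\mathbb Z/5}\phi_{[5]\setminus\{i,i+1\}}(x)^2,
\qquad
t_{F_{5,3}}(W)=\EE_{x}\prod_{S\in\binom{[5]}{3}}\phi_S(x).
\]
Let $\mathcal T$ denote the family of the five $3$-subsets $[5]\setminus\{i,i+1\}$, $i\in\mathbb Z/5$. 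It therefore suffices to prove that $\EE_x\prod_{T\in\mathcal T}\phi_T(x)^2\ge\EE_x\prod_{S\in\binom{[5]}{3}}\phi_S(x)$.

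Next I would exploit the symmetry of $[0,1]^5$ under the coordinate action of $S_5$, which preserves Lebesgue measure and satisfies $\phi_S(\sigma x)=\phi_{\sigma(S)}(x)$. Because $\mathcal T$ is the family of complements of the edges of a $5$-cycle, its stabiliser in $S_5$ is the automorphism group of that cycle, a dihedral group of order $10$; hence the $S_5$-orbit of $\mathcal T$ has exactly $5!/10=12$ members $\mathcal T_1,\dots,\mathcal T_{12}$, each a $5$-element set of $3$-subsets of $[5]$. Counting the $12\cdot 5=60$ incidences $(\mathcal T_l,T)$ with $T\in\mathcal T_l$, and using that $S_5$ acts transitively on $\binom{[5]}{3}$, every $3$-subset of $[5]$ lies in exactly $60/10=6$ of the families $\mathcal T_l$.

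Finally I would combine these two observations via H\"older's inequality. Relabelling the vertices of the glued side of $M$ does not change homomorphism densities, so each of the $12$ bipartite graphs $H_l$ obtained from $M$ by replacing its neighbourhood family $\mathcal T$ by $\mathcal T_l$ is isomorphic to $M$; consequently $\EE_x\prod_{T\in\mathcal T_l}\phi_T(x)^2=t_{M^2}(W)$ for every $l$. Applying H\"older's inequality with all twelve exponents equal to $12$,
\[
t_{F_{5,3}}(W)=\EE_x\prod_{S\in\binom{[5]}{3}}\phi_S(x)
=\EE_x\prod_{l=1}^{12}\paren{\prod_{T\in\mathcal T_l}\phi_T(x)^2}^{1/12}
\le\prod_{l=1}^{12}\paren{\EE_x\prod_{T\in\mathcal T_l}\phi_T(x)^2}^{1/12}=t_{M^2}(W),
\]
where the second equality holds because each $3$-subset of $[5]$ occurs among $\mathcal T_1,\dots,\mathcal T_{12}$ with total multiplicity $6\cdot 2=12$, so that $\prod_{l=1}^{12}\prod_{T\in\mathcal T_l}\phi_T(x)^2=\prod_{S\in\binom{[5]}{3}}\phi_S(x)^{12}$. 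This is exactly the claimed inequality.

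There is no genuine analytic difficulty here; the only real obstacle is the bookkeeping, namely choosing the right symmetry group and verifying the orbit and incidence counts so that the H\"older step is tight. The crucial point is that the ``cyclic'' $3$-subsets attached to the twelve Hamilton cycles of $K_5$ cover $\binom{[5]}{3}$ perfectly evenly, which is precisely what makes H\"older land exactly on $t_{F_{5,3}}(W)$ rather than on a strictly weaker quantity; this is also the numerical phenomenon that the divisibility hypotheses of Theorem~\ref{thm:main} are designed to guarantee in general.
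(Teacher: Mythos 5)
Your proof is correct and is essentially the paper's argument: both observe that the five $3$-subsets arising as neighbourhoods in $M$ cover $\binom{[5]}{3}$ evenly under the $S_5$ action, apply the measure-preserving coordinate change to see that each permuted family gives the same density $t_{M^2}(W)$, and then finish with H\"older's inequality. The only cosmetic difference is that the paper indexes H\"older by all $5!$ permutations $\phi\in S_5$ (via the averaged functions $\tilde f_I$ of \eqref{eq:Holder}), whereas you factor out the dihedral stabiliser of order $10$ and index by the $12$ orbit representatives; both bookkeepings yield the identical inequality.
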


By a result of the authors~\cite{CL16} (discussed in Appendix~\ref{sec:norm}), $F_{5,3}$ is a weakly norming graph, which in turn implies that it satisfies Sidorenko's conjecture, i.e., for any graphon $W$, $t_{F_{5,3}}(W)\geq t_{K_2}(W)^{30}$. Together with Theorem~\ref{thm:mobius^2}, this implies that $M^2$ also satisfies the conjecture.

For the proof of Theorem~\ref{thm:mobius^2}, we introduce some notation that we will use throughout the paper. Given a vector $(x_1, \dots, x_m)$ and $I = \{i_1, \dots, i_s\} \subseteq [m]$ with $i_1 < \dots < i_s$, let $x_I$ be the vector $(x_{i_1}, \dots, x_{i_s})$. Let $S_m$ be the set of all permutations of $[m]$ and let $\FF$ be a family of functions $\{f_I:I\subseteq [m]\}$, where each $f_I$ is a measurable function from $[0,1]^I$ to $[0,1]$. Then, for each $f_I \in \FF$, let
\[\tilde{f_I}(x_I):=\left(\prod_{\phi\in S_m}f_{\phi(I)}(x_I)\right)^{1/m!}.\]
We will need H\"{o}lder's inequality in the following form:
\begin{align}\label{eq:Holder}
\EE\left[\prod_{I\subseteq [m]}\tilde{f_I}(x_I)\right]\leq \prod_{\phi\in S_m}\EE\left[\prod_{I\subseteq [m]}f_{\phi(I)}(x_I)\right]^{1/m!}.
\end{align}
Finally, suppose $H$ is a bipartite graph with bipartition $A\cup B$. For a subset $F$ of $A$ and a graphon $W$, let $\rho(x_F):=\EE_{y}\prod_{i\in F}W(x_i,y)$. Then $t_H(W)$ can be rephrased as
\[t_H(W)=\EE\left[\prod_{v\in B}\rho(x_{N(v)})\right],\]
where the expectation, both here and in the proof below, is over $x_A$.

\begin{proof}[Proof of Theorem~\ref{thm:mobius^2}]
Note that $M$ is isomorphic to the bipartite graph on $Z \cup Z'$ where $Z$ and $Z'$ are two disjoint copies of the group $\mathbb{Z}_5$
and $i \in Z'$ is adjacent to $i-1$, $i$ and $i+1$ in $Z$.
Let $Z_{i}$ be the subset $\{i-1,i,i+1\}$ of $Z$.
Then
\[
t_{M^2}(W)=\EE\left[\prod_{i=1}^{5}\rho(x_{Z_i})^2\right].
\]
Now let $\FF$ be the set $\{f_I:I\subseteq [5]\}$, 
where $f_I(x_I)=\rho(x_I)^2$ if $I=Z_i$ and $f_I=1$ otherwise.
Then, since $Z_1, \dots, Z_5$ constitute half the triples in $\binom{A}{3}$, $\tilde{f_I}(x_I)=\rho(x_I)$ if $|I|=3$ and $\tilde{f_I}=1$ otherwise.
Moreover,
\[
t_{M^2}(W)=\EE\left[\prod_{i=1}^{5}\rho(x_{Z_i})^2\right]
=\EE\left[\prod_{I\subseteq [5]} f_{\phi(I)}(x_I)\right]
\]
for each $\phi\in S_5$.
Hence, H\"{o}lder's inequality~\eqref{eq:Holder} implies that
\begin{align*}
t_{F_{5,3}}(W) & =\EE\left[\prod_{T\in \binom{5}{3}}\rho(x_{T})\right] = \EE\left[\prod_{I\subseteq [5]}\tilde{f_I}(x_I)\right]\\
& \leq \prod_{\phi\in S_5}\EE\left[\prod_{I\subseteq [5]}f_{\phi(I)}(x_I)\right]^{1/5!}
= \prod_{\phi\in S_5}
\EE\left[\prod_{i=1}^{5}\rho(x_{Z_i})^2\right]^{1/5!}=t_{M^2}(W),
\end{align*}
as required.
\end{proof}

\section{Weighted homomorphism densities} \label{sec:frac}

Let $\alpha=(\alpha_v)_{v\in B}$ be a vector indexed by $v\in B$ with non-negative coordinates.
Define the \emph{$\alpha$-weighted homomorphism density} of $H$ by
\begin{align*}
	t_H^{\alpha}(W)=\EE\left[\prod_{v\in B}\rho(x_{N(v)})^{\alpha_v}\right],
\end{align*}
where the expectation here and below, unless otherwise indicated, is over $x_A$. In particular, if $\alpha_v=p$ for each $v\in B$, then $t_H^{\alpha}(W)=t_{H_A^p}(W)$. We say that the weight vector $\alpha=(\alpha_v)_{v\in B}$ is \emph{symmetric} if
$\alpha_u=\alpha_v$ whenever $\deg(u)=\deg(v)$, i.e., it assigns the same weight to vertices with the same degree.
When $\alpha$ is symmetric, we simply write $\alpha = (\alpha_k)_{k=1}^r$, where $r$ is the maximum degree of $B$ and $\alpha_k:=\alpha_{v}$ for all $v$ with $\deg(v)=k$.

For $r \leq m$, define the \emph{$(m,r)$-downset graph} to be the $(A,\HH)$-incidence graph with $A = [m]$ and $\HH=\binom{A}{\leq r}$.
The main result of this section states that, for certain symmetric integer weight vectors $\alpha$, a `weighted' version of Sidorenko's conjecture holds for the $(m,r)$-downset graph. This can also be interpreted as saying that Sidorenko's conjecture holds for certain blow-ups of the $(m,r)$-downset graph, where vertices in $\binom{[m]}{\leq r}$ of different degrees may be blown up by different amounts.

\begin{theorem}\label{thm:downset}
Suppose $r \leq m$ and let $H$ be the $(m,r)$-downset graph and $\alpha=(\alpha_k)_{k=1}^r$ be a symmetric integer weight vector such that $\binom{m-k}{r-k}$ divides $\alpha_k$ for each $1\leq k\leq r$ and $\alpha_r>0$. Then, for every graphon~$W$,
\begin{align}\label{eq:fracSido}
	t_{H}^{\alpha}(W)\geq t_{K_2}(W)^{e_\alpha(H)},
\end{align}
where $e_\alpha(H):=\sum_{v\in B}\alpha_{v}\deg(v)$.
\end{theorem}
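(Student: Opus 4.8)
The plan is to follow the template of Section~\ref{sec:example}: use the H\"older trick to replace the weighted downset graph by the incidence graph $F_{m,r}$, which is weakly norming by~\cite{CL16} and hence satisfies Sidorenko's conjecture, while an induction disposes of the lower-degree layers.

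\emph{Step 1 (reparametrisation).} Every $k$-element subset $J$ of $[m]$ is contained in exactly $\binom{m-k}{r-k}$ members of $\binom{[m]}{r}$, so by the divisibility hypothesis we may write $\alpha_k=\binom{m-k}{r-k}\beta_k$ with $\beta_k\in\mathbb{Z}_{\ge 0}$ and $\beta_r=\alpha_r>0$, and then regroup the integrand as
\[
t_H^\alpha(W)=\EE_{x_{[m]}}\Big[\prod_{T\in\binom{[m]}{r}}\Psi(x_T)\Big],\qquad
\Psi(x_T):=\prod_{\emptyset\ne J\subseteq T}\rho(x_J)^{\beta_{|J|}}.
\]
Thus $\Psi$ is a single symmetric function of $r$ variables and $t_H^\alpha(W)=t_{G^\ast}(W)$, where $G^\ast$ is obtained from $F_{m,r}$ by replacing each degree-$r$ vertex $T$ with a copy of the gadget $\widehat H$ — the $(r,r)$-downset graph with each degree-$k$ vertex blown up $\beta_k$-fold — glued along $T$. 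Using $\binom mk\binom{m-k}{r-k}=\binom mr\binom rk$ one checks $e(G^\ast)=\binom mr\,e(\widehat H)=e_\alpha(H)$, and that $\EE_{x_{[r]}}\Psi(x_{[r]})=t_{\widehat H}(W)$.

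\emph{Step 2 (reduction to $F_{m,r}$; induction on $r$).} For $r=1$ the graph $H$ is a matching and $t_H^\alpha(W)=\EE[\rho(x)^{\alpha_1}]^m\ge\EE[\rho(x)]^{\alpha_1m}=t_{K_2}(W)^{e_\alpha(H)}$ by convexity, since $\alpha_1\ge1$. For $r\ge2$ one separates the top layer by factoring $\Psi(x_T)=\rho(x_T)^{\beta_r}\Psi'(x_T)$, where $\Psi'$ collects the factors with $|J|<r$, so that $\prod_T\Psi'(x_T)$ is the integrand of a weighted $(m,r{-}1)$-downset graph handled by induction (with a suitably general inductive statement, since the divisibility conditions change between layers); what remains is to charge the top-layer factor $\prod_T\rho(x_T)^{\beta_r}$, in the presence of the lower layers, with the missing exponent $r\binom mr\alpha_r$ of $t_{K_2}(W)$. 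This is exactly where the H\"older inequality~(\ref{eq:Holder}) and the weakly norming property of $F_{m,r}$ enter: one chooses the family $\FF$ so that each permuted integrand $\EE[\prod_I f_{\phi(I)}(x_I)]$ equals $t_H^\alpha(W)$ while the symmetrised functions $\widetilde{f_I}$ reconstruct the \emph{uniformly} weighted incidence graph $F_{m,r}$ on the top layer, so that~(\ref{eq:Holder}) yields a comparison $t_H^\alpha(W)\ge t_{F_{m,r}}(W)^{\,\cdot}$ (blown up appropriately); Sidorenko for the weakly norming $F_{m,r}$, together with convexity, then supplies the required power of $t_{K_2}(W)$, and one verifies that all the exponents total $e_\alpha(H)$.

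\emph{Main obstacle.} The heart of the matter is Step 2: organising the H\"older trick so that the top layer is completed to the weakly norming graph $F_{m,r}$ without disturbing — or double-counting — the lower layers, and so that the resulting exponents balance; relatedly, the inductive hypothesis must be general enough to survive the mismatch between the divisibility conditions for $r$ and for $r-1$. That this requires genuine care is visible from the fact that the most naive reductions — for instance $t_H^\alpha(W)\ge t_{\widehat H}(W)^{\binom mr}$, or $t_H^\alpha(W)\ge t_{F_{m,r}}(W)^{e_\alpha(H)/e(F_{m,r})}$ — already fail for complete bipartite graphons $W$, so the layers must be treated jointly. It is precisely here that the divisibility hypotheses $\binom{m-k}{r-k}\mid\alpha_k$ are used: they guarantee that the layer-$k$ weight can be distributed with \emph{integer} multiplicity $\beta_k$ over the $r$-sets lying above each $k$-set, so that the symmetrisation in~(\ref{eq:Holder}) (equivalently, the gadget substitution $F_{m,r}[\widehat H]$) produces an honest graph rather than a fractional weighting. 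When $\alpha_k=0$ for all $k<r$ there are no lower layers and Step 2 is exactly the argument of Section~\ref{sec:example}.
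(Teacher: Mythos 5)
Your Step~1 is sound and matches the paper's setup: both reparametrise via $\alpha_k=\binom{m-k}{r-k}\beta_k$ and regroup the integrand as a product over $r$-sets of a gadget function $\Psi(x_T)=\prod_{J\subseteq T}\rho(x_J)^{\beta_{|J|}}$, with the exponent accounting $e_\alpha(H)=\binom mr e(\widehat H)$ exactly as in the paper's equation~\eqref{eq:distribute}. But Step~2 contains a genuine gap, and you have essentially flagged it yourself. The H\"older inequality~\eqref{eq:Holder} cannot do the work you ask of it here: it passes from an \emph{asymmetric} family $\{f_I\}$ to its symmetrised family $\{\tilde f_I\}$, and since $\alpha$ is already symmetric (the hypothesis of Theorem~\ref{thm:downset}), taking $f_I=\rho(x_I)^{\alpha_{|I|}}$ gives $\tilde f_I=f_I$ and~\eqref{eq:Holder} collapses to an equality. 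That symmetrisation is precisely the content of Theorem~\ref{thm:Holder}, which reduces general $H$ \emph{to} the symmetric-weight downset case; it gives no further purchase once you are inside that case. Your own observation that $t_H^\alpha(W)\ge t_{F_{m,r}}(W)^{e_\alpha(H)/e(F_{m,r})}$ fails on complete bipartite graphons confirms that comparing with the 2-graph $F_{m,r}$ is the wrong target, and you offer no concrete inductive statement that survives the shift of divisibility condition from $\binom{m-k}{r-k}\mid\alpha_k$ to $\binom{m-k}{r-1-k}\mid\alpha_k$.

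The idea you are missing is that one should leave the category of graphs entirely and pass to $(r+1)$-\emph{uniform hypergraphs}. The paper's Lemma~\ref{lem:star_hypergraph} re-encodes $t_H^\alpha(W)$ as $t_{\HH_\alpha}(W_\alpha)$ for an auxiliary $(r+1)$-graphon $W_\alpha(x,z_1,\dots,z_r)=\prod_k W(x,z_k)^{q_k}$ and an $(r+1)$-partite reflection hypergraph $\HH_\alpha$ built from $F$-chains; this packs the whole of $\Psi(x_T)$ — top layer and lower layers together — into a single hyperedge variable, which is exactly how the ``double-counting between layers'' you worry about is avoided. Lemma~\ref{lem:norming} then applies the hypergraph (not graph) weak-norming inequality of~\cite{CL16} to get $t_{\HH_\alpha}(W_\alpha)\ge t_{\GG_\alpha}(W_\alpha)^{\binom mr}$, where $\GG_\alpha$ is the sub-hypergraph living over a single $r$-set; and Lemma~\ref{lem:G_a} translates $t_{\GG_\alpha}(W_\alpha)$ back into $t_J^\beta(W)$ for the $(r,r)$-downset graph $J$, which has a vertex complete to $[r]$ (since $\beta_r>0$) and is handled by Conlon--Fox--Sudakov, not by any induction on $r$. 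So there is no induction on $r$ at all in the actual proof, and the weakly norming object is an $(r+1)$-graph, not $F_{m,r}$; these are the two missing ingredients in your plan.
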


The proof has three steps. First, we construct a weakly norming $(r+1)$-graph $\HH_{\alpha}$
and a measurable function $W_\alpha:[0,1]^{r+1}\rightarrow[0,1]$ such that
\[
t_H^{\alpha}(W)= t_{\HH_\alpha}(W_\alpha).
\]
Then, by using the fact that $\HH_\alpha$ is weakly norming, we obtain
\[
t_{\HH_\alpha}(W_\alpha)\geq t_{\GG_
\alpha}(W_\alpha)^{e(\HH_\alpha)/e(\GG_\alpha)},
\]
where $\GG_{\alpha}$ is a subgraph of $\HH_\alpha$ with `simpler' structure.
We then conclude by showing that
\[
t_{\GG_\alpha}(W_\alpha)\geq t_{K_2}(W)^{q_{\alpha,H}},
\]
where $q_{\alpha,H}$ is the `correct' exponent to yield \eqref{eq:fracSido}.

\medskip

Throughout this section, let $\beta_k:=\alpha_k/\binom{m-k}{r-k}$ for $k=1,2,\cdots,r$ and $\beta:=\beta_1\beta_2\cdots\beta_r$.
The divisibility condition in Theorem~\ref{thm:downset} ensures that each $\beta_k$ is an integer.
For an $r$-set~$F$,
let $U_0(F):=F$ and, for $1 \leq k\leq r$, let $U_k(F)$ be the union of $\beta_k$ disjoint copies of $\binom{F}{k}$. 
We say that $(u_0,u_1,u_2,\cdots,u_r)$, $u_i\in U_i(F)$, is an \emph{$F$-chain} if the corresponding subsets of $F$ form a chain under containment.

Let $V_0=[m]$ and, for $1 \leq k \leq r$, let $V_k$ be the disjoint union of all $U_k(F)$, $F\in \binom{[m]}{r}$.
In other words, each $v\in V_k$ corresponds to the $j$-th copy of the $k$-set $F'$ in $U_k(F)$
for some $1 \leq j \leq \beta_k$, $F'\in \binom{F}{k}$, and $F\in \binom{[m]}{r}$.
Since $V_k$ consists of $\beta_k$ copies of each $F'\in\binom{[m]}{k}$ for each $F \in\binom{[m]}{r}$ with $F' \subseteq F$, we have $|V_k|=\binom{m}{k}\binom{m-k}{r-k}\beta_k=\binom{m}{r}\binom{r}{k}\beta_k$.
For each $v\in V_k$, let $c_k(v)$ denote the set $u\in U_k(F)$ that corresponds to $v$.
We then define $\HH_\alpha$ to be the $(r+1)$-partite $(r+1)$-graph on $V = V_0\cup V_1\cup\cdots\cup V_r$ where $(v_0,v_1,\cdots,v_r)$, $v_i\in V_i$, is an edge if and only if the corresponding subsets $c_0(v_0),c_1(v_1),\cdots,c_r(v_r)$ form an $F$-chain for some $F\in \binom{[m]}{r}$.

To help understand this definition, let us also define the subgraph $\GG_\alpha$ which will appear in Lemmas~\ref{lem:norming} and \ref{lem:G_a}. Write $U_i'=U_i([r])$ and let $\GG_\alpha$ be the $(r+1)$-partite $(r+1)$-graph on $U_0'\cup U_1'\cup\cdots\cup U_{r}'$
where $(u_0,u_1,\cdots,u_r)$, $u_i \in U_i'$, is an edge if and only if 
it is an $[r]$-chain.
Clearly,~$\GG_\alpha$ is isomorphic to each subgraph of $\HH_\alpha$ induced on 
$U_0(F)\cup U_1(F)\cup\cdots\cup U_r(F)$ for an $r$-set $F$.
Moreover, these induced subgraphs isomorphic to $\GG_\alpha$ are edge disjoint, so $e(\HH_\alpha)=\binom{m}{r}e(\GG_\alpha)$.

We make two further remarks about the definition of $\HH_\alpha$. First, note that $V_0$ has a different status to the rest of the $V_k$ in that, for each $F \in \binom{[m]}{r}$, $U_0(F)$ is identified with the subset $F$ of $V_0 = [m]$, while, for all other $k$, the $U_k(F)$ are all disjoint subsets of $V_k$. Thus, the $\binom{m}{r}$ edge-disjoint copies of $\GG_\alpha$ that decompose $\HH_\alpha$ are close to being vertex disjoint in the sense that they can only intersect in $V_0$. Second, if any $\beta_k = 0$ for $k<r$, we simply ignore the $k$-th coordinate. This will reduce the uniformity of the hypergraph, but all of our arguments still go through in this case. For convenience of notation, we assume in what follows that $\beta_k \neq 0$ for all $k$.

Now let $W_\alpha:[0,1]^{r+1}\rightarrow [0,1]$ be the function
\[
W_{\alpha}(x,z_1,z_2,\cdots,z_r)=\prod_{k=1}^r W(x,z_k)^{q_k},
\]
where 
\[
q_k=\frac{\beta_k}{\beta(k-1)!(r-k)!}.
\]
Our first step is contained in the next lemma.

\begin{lemma}\label{lem:star_hypergraph}
Suppose $r \leq m$ and let $H$ be the $(m,r)$-downset graph and $\alpha=(\alpha_k)_{k=1}^{r}$ be a symmetric integer weight vector such that $\binom{m-k}{r-k}$ divides $\alpha_k$ for each $1\leq k\leq r$. Then, for every graphon $W$,
\[
t_H^{\alpha}(W)= t_{\HH_\alpha}(W_\alpha).
\]
\end{lemma}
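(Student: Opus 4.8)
The statement is an exact identity rather than an inequality, so the plan is simply to expand both sides into one expectation over a common family of independent uniform variables on $[0,1]$ and verify that the resulting integrands are the same function. On the left, since $H$ is the $(m,r)$-downset graph the neighbourhood of a vertex $F'\in B$ is $F'$ itself, so
\[
t_H^{\alpha}(W)=\EE_{x_{[m]}}\left[\prod_{k=1}^{r}\prod_{F'\in\binom{[m]}{k}}\rho(x_{F'})^{\alpha_k}\right].
\]
Writing $\rho(x_{F'})^{\alpha_k}=\prod_{\ell=1}^{\alpha_k}\left(\EE_{y}\prod_{i\in F'}W(x_i,y)\right)$ and introducing, for each pair $(F',\ell)$ with $F'\in\binom{[m]}{k}$ and $\ell\in[\alpha_k]$, a fresh variable $y_{F',\ell}$, independence and Fubini give
\[
t_H^{\alpha}(W)=\EE\left[\prod_{k=1}^{r}\ \prod_{F'\in\binom{[m]}{k}}\ \prod_{\ell=1}^{\alpha_k}\ \prod_{i\in F'}W(x_i,y_{F',\ell})\right],
\]
the expectation now being over all the (independent, uniform) variables $x_i$ ($i\in[m]$) and $y_{F',\ell}$.

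Next I would match the vertices of $\HH_\alpha$ to these variables. To $v\in V_0=[m]$ attach $x_v$. For $1\le k\le r$ and a fixed $k$-set $F'\subseteq[m]$, the set $F'$ underlies a copy in $V_k$ exactly $\binom{m-k}{r-k}\beta_k=\alpha_k$ times (one copy per pair consisting of an $F\in\binom{[m]}{r}$ with $F\supseteq F'$ and a copy index in $U_k(F)$); fix a bijection from these $\alpha_k$ vertices to $[\alpha_k]$ and attach $y_{F',\ell}$ to the vertex labelled $(F',\ell)$. This gives a measure-preserving bijection between $V(\HH_\alpha)$ and the family $\{x_i\}\cup\{y_{F',\ell}\}$. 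Under it an edge of $\HH_\alpha$ is the same as a choice of $F\in\binom{[m]}{r}$, a maximal flag $\{a\}=S_1\subsetneq S_2\subsetneq\cdots\subsetneq S_r=F$ of subsets of $F$, and a copy index at each level, with $v_0=a$ and $v_k$ the chosen copy of $S_k$ in $U_k(F)$; such an edge contributes to $t_{\HH_\alpha}(W_\alpha)$ the factor $\prod_{k=1}^{r}W(x_a,y_{S_k,\ell_k})^{q_k}$, where $\ell_k$ denotes the label of $v_k$.

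The crux is a counting identity. Fix $1\le k\le r$, a $k$-set $F'$, an element $i\in F'$ and a label $\ell\in[\alpha_k]$, and count the edges of $\HH_\alpha$ whose expansion contains the factor $W(x_i,y_{F',\ell})$. By the previous paragraph a level $k'$ of an edge can produce this factor only when $v_0=i$ and the variable attached to $v_{k'}$ is $y_{F',\ell}$; the latter forces the subset underlying $v_{k'}$ to be $F'$, hence $k'=|F'|=k$, and forces the label of $v_k$ to be $\ell$, which in turn determines the $r$-set $F\supseteq F'$ and the copy index at level $k$. An edge with these data is obtained by choosing a maximal flag of subsets of $F$ through $\{i\}$ and $F'$ together with copy indices at the other levels: there are $(k-1)!$ flags from $\{i\}$ up to $F'$, then $(r-k)!$ from $F'$ up to $F$, and $\beta/\beta_k$ choices of the remaining copy indices, giving $(k-1)!\,(r-k)!\,\beta/\beta_k$ edges, each contributing $W(x_i,y_{F',\ell})$ with exponent $q_k$. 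Hence the total exponent of $W(x_i,y_{F',\ell})$ in $t_{\HH_\alpha}(W_\alpha)$ is
\[
q_k\,(k-1)!\,(r-k)!\,\frac{\beta}{\beta_k}=\frac{\beta_k}{\beta\,(k-1)!\,(r-k)!}\,(k-1)!\,(r-k)!\,\frac{\beta}{\beta_k}=1,
\]
exactly its exponent on the left. Since on both sides these are all the factors and both products are indexed by the same data ($k$, $F'\in\binom{[m]}{k}$, $i\in F'$, $\ell\in[\alpha_k]$), the integrands coincide, whence $t_H^{\alpha}(W)=t_{\HH_\alpha}(W_\alpha)$.

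The only part that genuinely requires care is this last bookkeeping — the bijections $V_k\leftrightarrow\{(F',\ell)\}$ and the flag count — rather than anything conceptual; once the indexing is in place the value of $q_k$ is forced, and the divisibility hypothesis on $\alpha_k$ enters only to guarantee that each $\beta_k$, and hence $\HH_\alpha$ and the exponents $q_k$, are well-defined. In the degenerate situation where some $\beta_k=0$ one simply discards that level, as remarked after the construction of $\HH_\alpha$, which lowers the uniformity but leaves the expansion above unchanged, so it is harmless to assume throughout that $\beta_k\neq 0$.
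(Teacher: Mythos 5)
Your proof is correct and follows essentially the same route as the paper's: both expand $t_H^{\alpha}(W)$ into a product of terms $W(x_i, z_v)$ each appearing to the first power by introducing one fresh independent variable per vertex of $\bigcup_k V_k$, then compare exponents with the expansion of $t_{\HH_\alpha}(W_\alpha)$, and finish by the same flag-counting identity $q_k\,(k-1)!\,(r-k)!\,\beta/\beta_k = 1$. The only cosmetic difference is the order of bookkeeping: the paper first redistributes the exponent $\alpha_k = \binom{m-k}{r-k}\beta_k$ over the $r$-sets containing each $F'$ (introducing the local function $g(x_F)$) before expanding, whereas you expand directly into variables $y_{F',\ell}$ indexed by $(F',\ell)\in\binom{[m]}{k}\times[\alpha_k]$ and then observe that this indexing is in bijection with $V_k$; the resulting product and counting are identical.
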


\begin{proof}
For $F\in \binom{[m]}{r}$, let $g(x_F):=\prod_{F'\subseteq F}\rho(x_{F'})^{\beta_{|F'|}}$.
Then, since any $k$-set is contained in exactly $\binom{m-k}{r-k}$ $r$-sets, we may rewrite $t_H^{\alpha}(W)$ as
\begin{align}\label{eq:distribute}
t_H^{\alpha}(W)=\EE\left[\prod_{k=1}^{r}\prod_{F\in \binom{[m]}{k}}\rho(x_{F})^{\alpha_k}\right]
=\EE\left[\prod_{F\in \binom{[m]}{r}}g(x_F) \right].
\end{align}
For each $k$-set $F'$, note that 
\[
\rho(x_{F'})^{\beta_k}
=\EE_{y_1,\cdots,y_{\beta_k}}\prod_{i\in F'}\prod_{j=1}^{\beta_k}W(x_i,y_j).
\]
Now relabel all the $y_j$, $j=1,2,\cdots,\beta_k$, for each $F'\in\binom{F}{k}$, 
by mutually independent uniform random variables $z_{v}$, where $v$ ranges over those $v\in U_k(F)$.
Let $U(F):=U_1(F)\cup U_2(F)\cup\cdots\cup U_r(F)$
and write $i\sim v$ if $i$ is contained in the $k$-subset of $F$ that corresponds to $v$. Then, for each $F\in \binom{[m]}{r}$,
\[
g(x_F)=\prod_{F'\subseteq F}\rho(x_{F'})^{\beta_{|F'|}} 
= \EE_{z_{v}: v\in U(F)}\left[\prod_{v\in U(F)}\prod_{i\sim v}W(x_i,z_{v})\right],
\]
since the $z_v$, $v\in U(F)$, are mutually independent.
We may repeat this step for each $r$-set $F$ while assigning mutually independent random variables $z_v$ for all $v\in \bigcup_{F\in \binom{[m]}{r}}U(F)$.
Then, by \eqref{eq:distribute},
\begin{align}\label{eq:huge_product}
t_H^{\alpha}(W)
&=\EE\left[\prod_{F\in \binom{[m]}{r}}g(x_F) \right]
= \EE_{x_{[m]}}\left[\prod_{F\in\binom{[m]}{r}}\EE_{z_{v}: v\in U(F)}\left[\prod_{v\in U(F)}\prod_{i\sim v}W(x_i,z_{v})\right]\right] \nonumber \\
&=\EE\left[\prod_{F\in\binom{[m]}{r}}\prod_{v\in U(F)}\prod_{i\sim v}W(x_i,z_{v})\right],
\end{align}
where the last equality follows from mutual independence. 
We shall verify that the right-hand side equals $t_{\HH_\alpha}(W_\alpha)$ by comparing the exponents of the $W(x_i,z_v)$ in \eqref{eq:huge_product} with those in 
\begin{align}\label{eq:hyper_product}
t_{\HH_\alpha}(W_\alpha)
=\EE\left[\prod_{(i,v_1,\cdots,v_r)\in E(\HH_{\alpha})}\prod_{k=1}^r W(x_i,z_{v_k})^{q_k}\right].
\end{align}
For each pair consisting of $i\in [m]$ and $v\in U_k(F)$ with $i\sim v$,
the term $W(x_i,z_v)$ appears exactly once in the product in~\eqref{eq:huge_product}.
On the other hand, there exist $\beta(k-1)!(r-k)!/\beta_k$ $F$-chains 
that contain $\{i\}$ and $v$.
Thus, there are $\beta(k-1)!(r-k)!/\beta_k$ hyperedges in $\HH_{\alpha}$ containing
$i$ and $v$.
Hence, by expanding the product in~\eqref{eq:hyper_product}, each $W(x_i,z_{v})$ receives
the exponent 
\[
\frac{\beta (k-1)!(r-k)! q_k}{\beta_k}=1,
\]
where we used the definition of $q_k$.
\end{proof}

The next lemma gives a lower bound for the homomorphism density $t_{\HH_\alpha}(W_\alpha)$ in terms of $t_{\GG_\alpha}(W_\alpha)$. This follows in a straightforward manner from a result of the authors~\cite{CL16}, so we have consigned the proof, and a broader discussion of weakly norming hypergraphs, to an appendix.

\begin{lemma}\label{lem:norming}
The $(r+1)$-graph $\HH_\alpha$ is weakly norming. 
In particular, for every $(r+1)$-graphon~$W_\alpha$, 
\[t_{\HH_\alpha}(W_\alpha)\geq 
t_{\GG_\alpha}(W_\alpha)^{\binom{m}{r}}.\]
\end{lemma}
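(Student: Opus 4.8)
The plan is to deduce both assertions from the characterization of weakly norming hypergraphs given in~\cite{CL16}, applied to the $(r+1)$-partite structure of $\HH_\alpha$. First I would recall the relevant criterion: a hypergraph $\HH$ is weakly norming if it arises from a suitable ``reflection'' or ``group-invariant'' construction — concretely, if there is a finite group acting on the vertex set by automorphisms such that $\HH$ is the orbit of a single edge under the induced action on tuples, or, more usefully here, if $\HH$ can be realized as a ``Cayley-type'' hypergraph on a product of symmetric/cyclic groups. The key structural observation is that $\HH_\alpha$ is, by construction, $(r+1)$-partite with parts $V_0, V_1, \dots, V_r$, and an edge $(v_0, v_1, \dots, v_r)$ records a chain $c_0(v_0) \subset c_1(v_1) \subset \dots \subset c_r(v_r)$ of subsets of some $r$-set $F \subseteq [m]$, together with a choice of copy index $j_k \in [\beta_k]$ in each layer. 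I would show this edge set is invariant under a transitive group of ``symmetries'' — permuting $[m]$, permuting the $\beta_k$ copies within each layer, and (crucially) the reflections coming from the chain structure on a fixed $F$, which act like the automorphisms of the Boolean-lattice incidence structure $\GG_\alpha$. Verifying that $\HH_\alpha$ matches the weakly norming template of~\cite{CL16} is the step I expect to be the main obstacle, since one must check the precise invariance/reflection hypotheses of that paper rather than just a transitivity condition; this is exactly why the authors deferred it to an appendix.

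Once $\HH_\alpha$ is known to be weakly norming, the inequality $t_{\HH_\alpha}(W_\alpha) \geq t_{\GG_\alpha}(W_\alpha)^{\binom{m}{r}}$ follows from the standard consequence of the weakly norming property: if $\GG$ is a subgraph of a weakly norming $\HH$ and $\HH$ decomposes into $N$ edge-disjoint copies of $\GG$ covering all of $E(\HH)$, then $t_\HH(W) \geq t_\GG(W)^{N}$ for every (appropriate) graphon $W$. This is because, for a weakly norming $\HH$, the functional $W \mapsto t_\HH(W)^{1/e(\HH)}$ is (the restriction to nonnegative symmetric functions of) a norm, and monotonicity-type / subadditivity-type inequalities for such norms — combined with the fact that $t_\HH$ is supermultiplicative over the edge-disjoint decomposition — yield precisely $t_\HH(W_\alpha) \ge \prod_{F \in \binom{[m]}{r}} t_{\GG_\alpha}(W_\alpha) = t_{\GG_\alpha}(W_\alpha)^{\binom{m}{r}}$. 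Here I would invoke the two facts established just before the lemma statement: that each induced sub-hypergraph of $\HH_\alpha$ on $U_0(F) \cup \dots \cup U_r(F)$ is isomorphic to $\GG_\alpha$, and that these copies are edge-disjoint with $e(\HH_\alpha) = \binom{m}{r} e(\GG_\alpha)$, so that the decomposition hypothesis is met with $N = \binom{m}{r}$.

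To assemble this into a clean argument I would proceed in three short moves. (i) State the exact weakly norming criterion from~\cite{CL16} that I intend to apply. (ii) Exhibit the symmetry group of $\HH_\alpha$ and check it satisfies that criterion — handling the special status of $V_0$ (where the copies $U_0(F)$ overlap inside $[m]$) with care, since this is the one place the ``all layers disjoint'' picture breaks down; I would argue that identifying $U_0(F)$ with $F \subseteq [m]$ does not spoil the invariance because the relevant reflections still act on $[m]$ compatibly. (iii) Quote the norm inequality for edge-disjoint subgraph decompositions and plug in the decomposition of $\HH_\alpha$ into the $\binom{m}{r}$ copies of $\GG_\alpha$. Since the problem essentially reduces to a citation plus a bookkeeping verification of hypotheses, the genuine content — and the only real difficulty — is step (ii), the verification that $\HH_\alpha$'s combinatorial definition fits the weakly norming framework; everything after that is formal.
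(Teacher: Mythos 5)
You have the right skeleton — cite~\cite{CL16}'s result that reflection hypergraphs are weakly norming, verify that $\HH_\alpha$ is one, then apply Hatami's monotonicity property for weakly norming (hyper)graphs — and you correctly anticipate that the substantive work is the verification, including the special treatment needed for $V_0$. (One minor clarification: the final step does not rely on any supermultiplicativity over the edge-disjoint decomposition. Hatami's inequality $t_H(W) \geq t_{H'}(W)^{e(H)/e(H')}$ holds for \emph{any} subgraph $H' \subseteq H$ of a weakly norming $H$; the decomposition into $\binom{m}{r}$ edge-disjoint copies of $\GG_\alpha$ is used only to evaluate the exponent $e(\HH_\alpha)/e(\GG_\alpha) = \binom{m}{r}$, not as a hypothesis.)

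That said, the proposal stops exactly where the proof actually begins. The step you label (ii), which you yourself flag as ``the genuine content --- and the only real difficulty,'' is precisely what the paper's appendix is devoted to, and your write-up gives a description of what would need to be checked rather than a construction that checks it. The paper first treats the case $\beta_k \in \{0,1\}$ by exhibiting $\HH_\alpha$ explicitly as the $(T_0', T_1', \dots, T_r'; T', S_m)$-reflection hypergraph for concrete choices of adjacent transpositions $T' = \{s_{i(i+1)}\}$ in $S_m$ and subsets $T_k' \subset T'$, verifying that cosets of the subgroups generated by the $T_k'$ (each isomorphic to $S_k \times S_{r-k} \times S_{m-r}$) correspond to the vertices of $V_k$ and that edges of the reflection hypergraph correspond to $F$-chains. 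It then passes to general nonnegative $\beta_k$ via a separate lemma showing that one-sided blow-ups of reflection hypergraphs are again reflection hypergraphs, by adjoining commuting reflections that generate a factor $S_{\beta_k}$. Your suggestion of ``a transitive group of symmetries'' permuting $[m]$ and the copy-indices simultaneously would, if carried out, amount to the same thing — but the criterion of~\cite{CL16} requires a \emph{reflection} group equipped with a specified set of simple reflections, not merely a transitive automorphism group, and the proposal neither supplies such a realisation nor isolates the blow-up step. As written, that is the gap.
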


The following lemma is the final ingredient we need to prove Theorem~\ref{thm:downset}.

\begin{lemma}\label{lem:G_a}
If $\binom{m-k}{r-k}$ divides $\alpha_k$ for each $1\leq k\leq r$ and $\alpha_r > 0$, then, for every graphon $W$,
\[
t_{\GG_\alpha}(W_\alpha)\geq t_{K_2}(W)^{q_{\alpha,H}},
\]
where $q_{\alpha,H}=e_\alpha(H)/\binom{m}{r}$.
\end{lemma}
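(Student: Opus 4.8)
The plan is to unwind the definition of $\GG_\alpha$ and $W_\alpha$ and reduce the claimed inequality to a product of applications of the ordinary Sidorenko inequality for stars, which is just the Cauchy--Schwarz / power-mean inequality $\EE_y \prod_{i \in F} W(x_i,y) \geq t_{K_2}(W)^{|F|}$ integrated over $x_F$. First I would write out $t_{\GG_\alpha}(W_\alpha)$ explicitly: an edge of $\GG_\alpha$ is an $[r]$-chain $(u_0, u_1, \dots, u_r)$ with $u_0 = [r]$ (since $U_0' = [r]$ and $U_0([r])$ is identified with $[r]$ itself), $u_k \in U_k'$ a copy of some $k$-subset of $[r]$, and the $c_k(u_k)$ forming a chain. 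Plugging in $W_\alpha(x, z_1, \dots, z_r) = \prod_{k=1}^r W(x, z_k)^{q_k}$, the variable $x$ ranges over $x_1, \dots, x_r$ (the vertices of $U_0'$), and each $z_{u_k}$ is an independent copy attached to $u_k \in U_k'$. The key observation is that, for a fixed $u_k \in U_k'$ corresponding to a $k$-set $F' \subseteq [r]$, the variable $z_{u_k}$ appears in exactly those edges whose chain passes through $u_k$, and in each such edge it contributes the factor $\prod_{i \in F'} W(x_i, z_{u_k})^{q_k}$ (the set of $i$'s in the chain below $u_0 = [r]$ that are $\le u_k$ is exactly $F'$, since $u_0 = [r] \supseteq u_k$). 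So after pulling the expectation over $z_{u_k}$ inside — legitimate because all the $z$'s are mutually independent and each appears in a single ``star'' — I get a product over $k$ and over copies $u_k$ of the quantity $\EE_{z}\big[\prod_{i \in F'} W(x_i, z)^{q_k \cdot (\#\text{chains through }u_k)}\big]$.

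The second step is the bookkeeping: count how many $[r]$-chains pass through a fixed copy $u_k$ of a fixed $k$-set $F'$. A chain through $u_k$ is determined by choosing, for each $j \ne k$, a copy $u_j \in U_j'$ of a $j$-set $F'_j$ with the $F'_j$'s forming a chain through $F'$; the number of such chains is exactly $\beta (k-1)!(r-k)!/\beta_k$ — the same count that appeared in the proof of Lemma~\ref{lem:star_hypergraph}, restricted to the single $r$-set $[r]$. Hence the exponent on $W(x_i, z)$ becomes $q_k \cdot \beta(k-1)!(r-k)!/\beta_k = 1$ by the definition of $q_k$. So each star factor is exactly $\EE_z \prod_{i \in F'} W(x_i, z)$, which by Cauchy--Schwarz (the star/Sidorenko bound) is $\geq t_{K_2}(W)^{k}$ after taking the expectation over $x_{F'}$; and since we then take $\EE_{x_{[r]}}$ of a product of such nonnegative factors, Fubini and this pointwise-in-$x$-then-integrate argument give $t_{\GG_\alpha}(W_\alpha) \geq t_{K_2}(W)^{N}$ where $N = \sum_{k=1}^r k \cdot |U_k'| = \sum_{k=1}^r k \cdot \beta_k \binom{r}{k}$.

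The final step is to check $N = q_{\alpha,H} = e_\alpha(H)/\binom{m}{r}$. We have $e_\alpha(H) = \sum_{v \in B} \alpha_v \deg(v) = \sum_{k=1}^r \alpha_k \cdot k \cdot |\{v \in B : \deg v = k\}| = \sum_{k=1}^r \alpha_k \, k \binom{m}{k}$, since in the $(m,r)$-downset graph the vertices of degree $k$ are exactly the $k$-subsets of $[m]$. Dividing by $\binom{m}{r}$ and using $\binom{m}{k}/\binom{m}{r} \cdot \binom{m-k}{r-k} = \binom{r}{k}$ together with $\beta_k = \alpha_k/\binom{m-k}{r-k}$ gives $e_\alpha(H)/\binom{m}{r} = \sum_k k \binom{r}{k}\beta_k = N$, as needed. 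I expect the only real obstacle to be the careful justification that pulling each $\EE_{z_{u_k}}$ inside is valid and that the resulting exponent on each $W(x_i, z_{u_k})$ is exactly $1$ — i.e., getting the chain-counting identity and the compatibility of $q_k$ right — but this is precisely the computation already carried out in Lemma~\ref{lem:star_hypergraph} specialized to $\binom{[m]}{r}$ being a single $r$-set, so it should transfer with only notational changes. Everything else is a routine application of Cauchy--Schwarz and arithmetic with binomial coefficients.
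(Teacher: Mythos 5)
Your calculation of $t_{\GG_\alpha}(W_\alpha)$ and the arithmetic identity $N = \sum_{k=1}^r k\binom{r}{k}\beta_k = e_\alpha(H)/\binom{m}{r}$ are both correct, and they agree with the first and last steps of the paper's proof. But there is a genuine gap at the middle step, which is exactly where the actual content of the lemma lies.

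After the bookkeeping you correctly arrive at
\[
t_{\GG_\alpha}(W_\alpha)=\EE_{x_{[r]}}\Bigl[\prod_{k=1}^{r}\prod_{F'\in\binom{[r]}{k}}\rho(x_{F'})^{\beta_k}\Bigr],
\]
and you then want to conclude that this is at least $\prod_{k,F'} t_{K_2}(W)^{k\beta_k}$ by applying the star bound $\EE_{x_{F'}}\rho(x_{F'})\geq t_{K_2}(W)^{|F'|}$ to each factor separately. This does not follow. The quantities $\rho(x_{F'})$, for different $F'\subseteq[r]$, are not independent: they share the underlying $x$-variables, so the expectation of their product is not bounded below by the product of their expectations. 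The star inequality is an inequality after integrating over $x_{F'}$; it gives no pointwise bound $\rho(x_{F'})\geq t_{K_2}(W)^{|F'|}$ (which is false in general -- $\rho$ can vanish on a set of positive measure), so no ``pointwise-in-$x$-then-integrate'' argument is available. And the direction of the desired correlation inequality $\EE[\prod f_i]\geq\prod\EE[f_i]$ is one you cannot assume: for generic nonnegative functions of overlapping variables it can fail, e.g.\ $f(x_1)=x_1$, $g(x_2)=x_2$, $h(x_1,x_2)=(1-x_1)(1-x_2)$ gives $\EE[fgh]=1/36<1/16=\EE[f]\EE[g]\EE[h]$.

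What you are in fact trying to prove at this point is precisely Sidorenko's conjecture for the bipartite graph whose homomorphism density is $\EE[\prod_{F'\subseteq[r]}\rho(x_{F'})^{\beta_{|F'|}}]=t_J^\beta(W)$, where $J$ is the $(r,r)$-downset graph with symmetric integer weights $\beta$. This is not a routine Cauchy--Schwarz computation. The paper's proof observes that because $\beta_r=\alpha_r\geq1$, this bipartite graph has a vertex on the $B$-side (the copy of $[r]$ itself) that is complete to $A$, and it then invokes the Conlon--Fox--Sudakov theorem for bipartite graphs with a dominating vertex. That appeal -- and the hypothesis $\alpha_r>0$ that makes it applicable -- is the substantive ingredient your plan is missing.
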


\begin{proof}
Let $U'=U_1'\cup U_2'\cup\cdots\cup U_r'$, where the $U'_i$ are as in the definition of $\GG_\alpha$.
Then, following the proof of Lemma~\ref{lem:star_hypergraph}, we have
\[
t_{\GG_\alpha}(W_\alpha)
=\EE\left[\prod_{(i,u_1,\cdots,u_r)\in E(\GG_{\alpha})}\prod_{k=1}^r W(x_i,z_{u_k})^{q_k}\right]
=\EE\left[\prod_{i\sim u, i\in [r],u\in U'}W(x_i,z_{u})\right],
\]
where $i\sim u$ means that $i$ is contained in the subset of $[r]$ corresponding to $u$.
Hence, we may write
\begin{align}\label{eq:claim}
t_{\GG_\alpha}(W_\alpha)=\EE\left[\EE_{z_u:u\in U'}\left[\prod_{i\sim u, i\in [r],u\in U'}W(x_i,z_{u})\right]\right]=\EE\left[\rho(x_{[r]})^{\beta_r}\prod_{F\subsetneq [r]}\rho(x_{F})^{\beta_{|F|}}\right]=t_J^\beta(W),
\end{align}
where $J$ is the $(r,r)$-downset graph and $\beta=(\beta_i)_{i=1}^r$ is a symmetric integer weight vector with $\beta_r=\alpha_r\geq 1$.
Since $t_J^\beta(W)$ can be interpreted as the homomorphism density in $W$ of a bipartite graph with at least one vertex complete to the other side, the result of Conlon, Fox and Sudakov~\cite{CFS10} implies that
\[
t_J^\beta(W)\geq t_{K_2}(W)^{e_\beta(J)},
\]
where $e_\beta(J)=\sum_{k=1}^r\binom{r}{k}k\beta_k$.
By the elementary identity $\binom{m}{r}\binom{r}{k}=\binom{m}{k}\binom{m-k}{r-k}$,
\[
e_\beta(J)=\frac{1}{\binom{m}{r}}\sum_{k=1}^r\binom{m}{k}\binom{m-k}{r-k}k\beta_k
=\frac{1}{\binom{m}{r}}\sum_{k=1}^r\binom{m}{k}k\alpha_k
=\frac{e_\alpha(H)}{\binom{m}{r}},
\]
as desired.
\end{proof}

\begin{proof}[Proof of Theorem~\ref{thm:downset}]
By Lemmas~\ref{lem:star_hypergraph}, \ref{lem:norming}, and \ref{lem:G_a}, we obtain
\[
t_H^{\alpha}(W)= t_{\HH_{\alpha}}(W_\alpha)
\geq t_{\GG_{\alpha}}(W_\alpha)^{\binom{m}{r}}\geq t_{K_2}(W)^{e_\alpha(H)},
\]
as required.
\end{proof}

\section{The H\"{o}lder trick}\label{sec:Holder}

The argument in Section~\ref{sec:example} that $M^2$ satisfies Sidorenko's conjecture for $M = K_{5,5}\setminus C_{10}$ had two ingredients, the fact that the $(5,3)$-incidence graph $F_{5,3}$ is weakly norming and, therefore, satisfies the conjecture, and the inequality $t_{M^2}(W) \geq t_{F_{5,3}}(W)$ for all graphons $W$. The main result of the previous section may be seen as a generalisation of the fact that $F_{5,3}$ satisfies the conjecture. To complete the proof of Theorem~\ref{thm:main}, we now generalise the inequality. This will again be a simple consequence of H\"older's inequality. 

\begin{theorem}\label{thm:Holder}
Let $H$ be a bipartite graph with bipartition $A\cup B$ and $\max_{b\in B}\deg(b)=r$ and, for each $1 \leq k \leq r$, let $\alpha_k:=d_k/\binom{|A|}{k}$, where $d_k$ is the number of vertices in $B$ of degree $k$.
Then, for every graphon $W$,
\begin{align*}
 t_H(W)\geq t_{J}^{\alpha}(W),
\end{align*}
where $J$ is the $(|A|,r)$-downset graph and $\alpha=(\alpha_k)_{k=1}^r$. Moreover, if $\deg(b)=r$ for all $b\in B$, then, for every graphon $W$, 
\[t_H(W) \geq t_{F}^\alpha (W),\]
where $F$ is the $(|A|, r)$-incidence graph and $\alpha=(\alpha_k)_{k=1}^r$ is the symmetric weight vector with $\alpha_r=|B|/\binom{|A|}{r}$ and $\alpha_k=0$ for $k\neq r$.
\end{theorem}

Theorem~\ref{thm:main} is an immediate consequence of this result and Theorem~\ref{thm:downset}. 
Recall the statement, that Sidorenko's conjecture holds for any bipartite graph $H$ with bipartition $A \cup B$ and $\max_{b \in B} \deg(b) = r$ such that 
the number of vertices of degree $k$ is divisible by $\binom{|A|}{r}\binom{r}{k}=\binom{|A|}{k}\binom{|A|-k}{r-k}$ for each $1 \leq k \leq r$.

\begin{proof}[Proof of Theorem~\ref{thm:main}]
As in Theorem~\ref{thm:Holder}, for each $1 \leq k \leq r$,
let $d_k$ be the number of vertices in $B$ of degree $k$
and $\alpha_k:=d_k/\binom{|A|}{k}$. Then, by Theorem~\ref{thm:Holder},
\[t_H(W)\geq  t_{J}^{\alpha}(W),\]
where $J$ is the $(|A|,r)$-downset graph.
By the divisibility assumption, $\alpha=(\alpha_k)_{k=1}^r$ is a nonnegative integer vector such that $\binom{|A|-k}{r-k}$ divides $\alpha_k$ for each $1\leq k\leq r$ and $\alpha_r>0$.
Hence, we may apply Theorem~\ref{thm:downset} to conclude that
\begin{align*}
t_J^\alpha(W)\geq t_{K_2}(W)^{e_{\alpha}(J)}.
\end{align*}
Therefore, since
\[e_{\alpha}(J) = \sum_{k=1}^r \binom{|A|}{k} \alpha_k k = \sum_{k=1}^r d_k k = e(H),\]
Sidorenko's conjecture holds for $H$.
\end{proof}

Theorem~\ref{thm:reg} is a similar consequence of the second part of Theorem~\ref{thm:Holder} with one slight twist.

\begin{proof}[Proof of Theorem~\ref{thm:reg}]
With the slight abuse of notation $\alpha = \alpha_r = |B|/\binom{|A|}{r}$, we have
\begin{align*}
	t_{F}^{\alpha}(W)=\EE\left[\prod_{I\in \binom{A}{r}}\rho(x_{I})^{\alpha}\right]
	\geq \EE\left[\prod_{I\in \binom{A}{r}}\rho(x_{I})\right]^{\alpha} = t_{F}(W)^\alpha,
\end{align*}
where the inequality follows from the convexity of $f(x)=x^{\alpha}$ when $\alpha\geq 1$.
Therefore,
\begin{align*}
    t_H(W) \geq t_{F}^{\alpha}(W) \geq t_{F}(W)^{\alpha} \geq t_{K_2}(W)^{\alpha r\binom{|A|}{r}} = t_{K_2}(W)^{e(H)},
\end{align*}
where the first inequality is the second part of Theorem~\ref{thm:reg}, the third inequality follows from the fact that the $(|A|,r)$-incidence graph $F$ is weakly norming~\cite{CL16} and, therefore, satisfies Sidorenko's conjecture, and the final equality comes from the straightforward computation $\alpha r\binom{|A|}{r}=r|B|=e(H)$.
\end{proof}

We now return to the proof of Theorem~\ref{thm:Holder}.

\begin{proof}[Proof of Theorem~\ref{thm:Holder}]
Let $m:=|A|$ and identify the set $A$ with $[m]$.
Let $\FF=\{f_I:I\subseteq [m]\}$ be the family of functions 
given by $f_I(x_I)=\rho(x_I)^{c_I}$, where $c_I$ is the number of vertices $b\in B$ such that $N(b)=I$.
If $I$ and $I'$ are subsets of $[m]$ of size $k$, 
then there are exactly $k!(m-k)!$ permutations $\phi$
that map $I$ onto $I'$
and, for every such $\phi$, $f_{\phi(I)}(x_I)=\rho(x_I)^{c_{I'}}$.
Thus, for each $I$ of size $k$,
\[
\tilde{f}_I(x_I)=\left(\prod_{\phi\in S_m}f_{\phi(I)}(x_I)\right)^{1/m!}
=\rho(x_I)^{\sum_{|K|=k}c_K/\binom{m}{k}}=\rho(x_I)^{\alpha_k}.
\]
On the other hand, since each $\phi\in S_m$ is just a relabeling of $A=[m]$,
\[
t_H(W) = \EE\left[\prod_{I\subseteq [m]}f_{\phi(I)}(x_I)\right].
\]
Therefore, H\"{o}lder's inequality in the form~\eqref{eq:Holder} gives
\[
t_{J}^{\alpha}(W)=
\EE\left[\prod_{I\subseteq [m]}\rho(x_I)^{\alpha_{|I|}}\right]=
\EE\left[\prod_{I\subseteq [m]}\tilde{f_I}(x_I)\right]\leq \prod_{\phi\in S_m}\EE\left[\prod_{I\subseteq [m]}f_{\phi(I)}(x_I)\right]^{1/m!}=t_H(W),
\]
as desired. Moreover, if $\deg(b)=r$ for all $b\in B$, then $\alpha_r=|B|/\binom{m}{r}$ and $\alpha_k=0$ for $k\neq r$. Thus,
\begin{align*}
    t_H(W) \geq \EE\left[\prod_{I\in\binom{[m]}{r}}\rho(x_I)^{\alpha_{r}}\right] = t_{F}^{\alpha}(W),
\end{align*}
which proves the second part of the statement.
\end{proof}

\section{Concluding remarks}

As a closing remark, we note another corollary of  Theorem~\ref{thm:main} which we believe to be of independent interest. Put briefly, it says that a certain local version of Sidorenko's conjecture holds for any bipartite graph $H$.

\begin{corollary}
For every bipartite graph $H$ with bipartition $A\cup B$ and every $\varepsilon, q>0$, there is $n_0=n_0(\varepsilon,H,q)$ such that every $n$-vertex graph $G$ with $n\geq n_0$ and $q=t_{K_2}(G)$ has an $|A|$-tuple $x_A$ of distinct vertices such that $t_H(G;x_A)\geq (1-\varepsilon)q^{e(H)}$. 
\end{corollary}

\begin{proof}
Let $a=|A|$ and $n=|V(G)|$. Let $K$ be the set of non-degenerate $a$-tuples in $V(G)^{A}$, noting that $|V(G)^{A}\setminus K| \leq an^{a-1}$. Then
\begin{align*}
    \EE_{x_A} t_H(G;x_A)^k = \frac{1}{n^{a}} \left(\sum_{x_A\in K} t_H(G;x_A)^k+ \sum_{x_A\notin K} t_H(G;x_A)^k\right)\leq \frac{1}{n^a}\sum_{x_A\in K} t_H(G;x_A)^k +\frac{a}{n}.
\end{align*}
Therefore, by letting $k=a!$, Corollary~\ref{cor:main} implies that
\begin{align*}
    \EE_{x_A\in K} t_H(G;x_A)^k
    &\geq \frac{n^a}{n(n-1)\cdots (n-a+1)}\left(\EE_{x_A} t_H(G;x_A)^k  - \frac{a}{n}\right)
    \\ 
    &\geq (1-\varepsilon) t_{K_2}(G)^{k\cdot e(H)},
\end{align*}
provided $n$ is sufficiently large in terms of $\epsilon$, $H$ and $q$. There must then be $x_A\in K$ with $t_H(G;x_A)\geq (1-\varepsilon)t_{K_2}(G)^{e(H)}$, since otherwise we would have $\EE_{x_A\in K} t_H(G;x_A)^k < (1 - \varepsilon)^k t_{K_2}(G)^{k\cdot e(H)}$, contradicting the inequality above.
\end{proof}

\section*{Acknowledgments} 
We are greatly indebted to Yufei Zhao and also to Leonardo Nagami Coregliano and Sasha Razborov for spotting a substantial error in an earlier version of this paper. The upshot of the resulting changes is the divisibility condition in Theorem~\ref{thm:main}, which was not present in the previous version. We are also grateful to the anonymous referees for their detailed reviews.

\bibliographystyle{amsplain}
\bibliography{references}




\appendix 
\section*{Appendix}
\section{Weakly norming hypergraphs} \label{sec:norm}
Following Hatami~\cite{H10}, we say that a hypergraph $H$ is weakly norming if the functional $\|\cdot\|_{r(H)}$ defined by
\[\|W\|_{r(H)} = t_H(|W|)^{1/e(H)}\]
is a norm on the space of bounded symmetric measurable functions. 
As shown by Hatami~\cite{H10}, any weakly norming hypergraph $H$ has the property that for any $H' \subseteq H$ and any graphon $W$, 
\[\|W\|_{r(H)} \geq \|W\|_{r(H')}\]
or, in the language of homomorphism densities,
\[t_H(W) \geq t_{H'}(W)^{e(H)/e(H')}.\]
In particular, this implies that any weakly norming hypergraph satisfies Sidorenko's conjecture.

The main result of interest to us here is a result of the authors~\cite[Theorem 5.1]{CL16} saying that a certain class of hypergraphs, which we call reflection hypergraphs, are weakly norming. To define this class, suppose that $W$ is a finite reflection group, $T$ is a set of simple reflections in $W$, and $T_1, \cdots, T_r$ are subsets of $T$. Then the \emph{$(T_1, \cdots, T_r; T, W)$-reflection hypergraph} is the $r$-partite $r$-graph whose parts are the cosets of the subgroup $W_k$ generated by $T_k$ for each $k = 1, \cdots, r$, with an edge for every $r$-tuple of the form $(wW_1, \cdots, wW_r)$ with $w \in W$. An $r$-graph is then said to be a \emph{reflection hypergraph} if it is isomorphic to the $(T_1, \cdots, T_r; T, W)$-reflection hypergraph for some choice of parameters.

\begin{theorem}
Reflection hypergraphs are weakly norming.
\end{theorem}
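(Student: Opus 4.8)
The goal is to prove that for every reflection hypergraph $H$ the functional $\|W\|_{r(H)} = t_H(|W|)^{1/e(H)}$ is a norm on bounded symmetric measurable functions. Nonnegativity and homogeneity are immediate from the definition, and positive-definiteness --- that $\|W\|_{r(H)}=0$ forces $W=0$ almost everywhere --- follows, once we have the triangle inequality, from the fact that a reflection hypergraph is connected: since $W$ is generated by $T$, the edges $(wW_1,\dots,wW_r)$ with $w\in T$ already link vertices across every pair of parts. So the entire problem reduces to establishing the triangle inequality for $\|\cdot\|_{r(H)}$.

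Here I would first reduce to the multilinear ``Gowers-type'' inequality
\[
t_H(U_1,\dots,U_{e(H)})\ \le\ \prod_{j=1}^{e(H)}\|U_j\|_{r(H)},
\]
valid for all nonnegative symmetric functions $U_1,\dots,U_{e(H)}$, where $t_H(U_1,\dots,U_{e(H)})$ denotes the homomorphism density with the $j$-th edge carrying $U_j$. It suffices to prove this, because it implies the triangle inequality in a short step: since $|W+W'|\le|W|+|W'|$ pointwise, and $t_H$ is monotone in its nonnegative arguments and multilinear over the edge slots, expanding $t_H(|W|+|W'|)$ over the subsets $S\subseteq E(H)$ and bounding each of the $2^{e(H)}$ resulting terms by the displayed inequality yields $\|W+W'\|_{r(H)}^{e(H)}\le\sum_{S}\|W\|_{r(H)}^{|S|}\|W'\|_{r(H)}^{e(H)-|S|}=(\|W\|_{r(H)}+\|W'\|_{r(H)})^{e(H)}$. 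Thus it suffices to prove the Gowers-type inequality for every reflection hypergraph.

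The core of the argument is then an induction on $|W|$ (equivalently, on the length $N$ of the longest element $w_0$ of $W$), exploiting that $W$ acts on $H$ by automorphisms --- left translation on each coset space $W/W_k$ respects the edge set --- so that each reflection $t\in W$ acts as an involutive automorphism whose fixed-vertex set is a ``mirror'' $M_t$. The base case $|W|=1$ is trivial, as $H$ is then a single $r$-edge and the inequality reads $\EE[U_1]\le\|U_1\|_{L^1}$. For the inductive step, fix a reduced expression $w_0=s_{i_1}\cdots s_{i_N}$; this picks out a convex ordering $t_1,\dots,t_N$ of the reflections of $W$. Reading this order, perform $N$ rounds of Cauchy--Schwarz, the $j$-th reflecting $H$ across $M_{t_j}$: each round is a Cauchy--Schwarz inequality in the coordinates attached to $M_{t_j}$, which leaves the product $\prod_j\|U_j\|_{r(H)}$ on the right untouched and ``folds'' the integrand. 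The Coxeter relations encoded in the reduced word should guarantee that this process terminates, that after each fold one is again looking at the homomorphism density of a reflection hypergraph associated with a smaller reflection group, to which the inductive hypothesis applies, and that the exponents accumulated along the way multiply out to exactly the weakly-norming exponent $1/e(H)$ on each factor.

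I expect the unfolding step to be the main obstacle. The delicate points are: choosing the mirrors and the order in which to reflect across them (this is what the reduced expression for $w_0$ is for), checking that the intermediate integrands genuinely are homomorphism densities of reflection hypergraphs of smaller reflection groups, so that the induction can be invoked, and verifying that the bookkeeping of exponents collapses to the bound claimed. Essentially all of the finite-reflection-group input --- reduced words for $w_0$, the exchange and deletion conditions, and the way cosets of parabolic subgroups tile the Coxeter complex --- is used precisely at this point; everything else is Cauchy--Schwarz, H\"older's inequality, and the multilinearity of $t_H$.
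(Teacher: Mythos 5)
This theorem is not proved in the paper: it is quoted verbatim as a black box from \cite[Theorem 5.1]{CL16}, with the appendix then devoted to verifying that $\HH_\alpha$ is a reflection hypergraph so that the cited result applies. There is therefore no in-paper proof to compare against, and what follows assesses your sketch on its own terms.

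Your reduction to the multilinear Gowers-type inequality and the deduction of the triangle inequality from it are standard and fine. The problem lies in the inductive step. You propose to induct on $|W|$, folding across the mirrors $M_{t_1},\dots,M_{t_N}$ in a convex order, with the claim that ``after each fold one is again looking at the homomorphism density of a reflection hypergraph associated with a smaller reflection group.'' This is not so: folding the Coxeter complex of $W$ along a single wall does not leave you with the Coxeter complex of a finite reflection group of smaller order. The set of cosets obtained after identifying a chamber with its reflection across one wall is not the chamber system of any smaller $W'$; indeed a finite reflection group is not built up from a smaller one by adjoining a single reflection except in trivial cases. So the object that results from one round of Cauchy--Schwarz is not, in general, a reflection hypergraph, and the inductive hypothesis cannot be invoked at that stage. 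This is exactly the place you flag as ``the main obstacle,'' and it is precisely where the technical content of \cite[Theorem 5.1]{CL16} lives: the actual argument there does not pass through intermediate smaller reflection hypergraphs, but rather tracks a chain of edge-set partitions induced by a gallery from the identity chamber to $w_0$, using the reflections to symmetrize the $e(H)$ functions $U_1,\dots,U_{e(H)}$ step by step until they coincide, at which point the bound is immediate. Without a rigorous version of this step, your sketch is not a proof.

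Two smaller points. First, $|W|$ and $\ell(w_0)$ are not interchangeable induction parameters (e.g.\ $|S_n|=n!$ while $\ell(w_0)=\binom n2$), so ``equivalently'' is misleading even at the level of the outline. Second, positive-definiteness of $\|\cdot\|_{r(H)}$ does not come from connectedness of $H$; it follows, once the Gowers-type inequality is in hand, from subhypergraph monotonicity applied to a single edge, which gives $\|W\|_{r(H)}\geq \EE\,|W|$ and hence that $\|W\|_{r(H)}=0$ forces $W=0$ almost everywhere.
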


In order to prove Lemma~\ref{lem:norming}, it therefore suffices to show that $\HH_\alpha$ is a reflection hypergraph.
If $\beta_k=1$ for each $k$, then 
it is not hard to construct a reflection hypergraph
that is isomorphic to~$\GG_\alpha$.
Let $s_{ij}$ be the permutation in $S_r$ that swaps $i$ and $j$.
It is well-known that $T:=\{s_{i(i+1)}:i=1,2,\cdots,r-1\}$ is a set of simple reflections.
We claim that the $(T_0,T_1,T_2,\cdots,T_r;T,S_r)$-reflection hypergraph with the choice $T_0=T_1$, $T_i=T\setminus \{s_{i(i+1)}\}$, $1\leq i<r$,
$T_r=T$,
is isomorphic to $\GG_\alpha$
when $\beta_1=\beta_2=\cdots=\beta_r=1$.
To see this,
observe first that each $T_k$, $1\leq k<r$, generates the subgroup
\[
W_k:=\{\sigma\in S_r:\sigma(j)\leq k\text{ for each }j\leq k\},
\]
which is isomorphic to $S_k\times S_{r-k}$.
Thus, each coset $wW_k$ is the set of permutations in $S_r$ that map $[k]$ onto some $k$-subset $F\in\binom{[r]}{k}$,
which allows us to identify each coset with the $k$-set $F$.
For example, the chain
$\{1\} = \{1\}\subset\{1,2\}\subset\cdots\subset[r-1]\subset[r]$, which is an edge in $\GG_\alpha$, corresponds to the $(r+1)$-tuple $(W_1,W_1,\cdots,W_{r-1},S_r)$. 
Similarly, every edge in $\GG_\alpha$, identified by a chain $\{i\}=F_1\subset\cdots\subset F_{r-1}\subset[r]$, where $|F_k|=k$, corresponds to an $(r+1)$-tuple $(wW_1,wW_1,\cdots,wW_{r-1},S_r)$ of cosets, where $w\in S_r$ is a permutation that maps $1$ to $i$.
This proves the claim.

Recall now that $\HH_\alpha$ consists of $\binom{m}{r}$ edge-disjoint $(r+1)$-graphs
$\HH_\alpha[U_0(F) \cup U(F)]$, $F\in \binom{[m]}{r}$,
each of which is isomorphic to~$\GG_\alpha$.
To realise $\HH_\alpha$ with $\beta_i=1$ for all $i=1,2,\cdots,r$ as a reflection hypergraph, 
one may add new reflections $s_{j(j+1)}$, for $j=r,\cdots,m-1$, to $T$,
and amend the $T_k$, $0 \leq k \leq r$, to generate $\binom{m}{r}$ copies of each coset $wW_k$, $1\leq k\leq r$, in the $(T_0,T_1,\cdots,T_r;T,S_r)$-reflection hypergraph.
More explicitly,
$\HH_\alpha$ is isomorphic to the $(T_0',T_1',T_2',\cdots,T_r';T',S_m)$-reflection hypergraph,
where $T'=T\cup\{s_{j(j+1)}:r\leq j<m\}$,
$T_0'=T_0\cup\{s_{j(j+1)}:r\leq j<m\}$
and 
$T_k'=T_k\cup\{s_{j(j+1)}:r+1\leq j<m\}$
for $0<k\leq r$.
To see this,
note first that the cosets of the subgroup generated by $T_0'$ and $T_r'$ 
correspond to singletons and $r$-subsets in $[m]$, respectively.
For $1\leq k<r$, $T_k'=T'\setminus\{s_{k(k+1)},s_{r(r+1)}\}$ generates the subgroup
\[
W_k':=\{\sigma\in S_m:\sigma(i)\leq k \text{ if }1\leq i\leq k,~ k<\sigma(j)\leq r \text{ if }k<j\leq r \},
\]
which is isomorphic to $S_k\times S_{r-k}\times S_{m-r}$.
Thus, each coset of $W_k'$ is identified with a disjoint pair $(F',F'')$ consisting of a $k$-set $F'$ and an $(r-k)$-set $F''$,
which corresponds to an element in $U_k(F'\cup F'')$.
For instance, $W_k'$ corresponds to the copy of $[k]$ in $U_k([r])$.
Then it is easy to check that each edge in the $(T_0',T_1',T_2',\cdots,T_r';T',S_m)$-reflection hypergraph corresponds to an $F$-chain for some $r$-set $F$.

We also remark that these constructions generalise to the case $\beta_k\in\{0,1\}$ for $k=1,2,\cdots,r$.
If some $\beta_k=0$, one may simply delete the corresponding $T_k$ or $T_k'$ when constructing $\GG_\alpha$ and $\HH_\alpha$, respectively.
By doing so, reflection $t$-graphs, where $t$ is the number of positive $\beta_k$, are obtained.

\medskip

Given the union of $r$ disjoint sets $A_1\cup A_2\cup\cdots\cup A_r$,
let $\overbar{A}_k=\cup_{i=1}^{r}A_i\setminus A_k$. Moreover, if $\HH$ is an $r$-partite $r$-graph with $r$-partition $A_1\cup A_2\cup\cdots\cup A_r$, the blow-up 
$\HH^p_{\overbar{A}_k}$ is obtained by taking $p$ vertex-disjoint copies of $\HH$ and gluing all copies of $\overbar{A}_k$ along corresponding vertices. The following lemma then says that blow-ups of reflection hypergraphs are also reflection hypergraphs.

\begin{lemma}
Let $\HH$ be an $r$-partite $r$-graph on $A_1\cup A_2\cup\cdots\cup A_r$.
If $\HH$ is a reflection hypergraph and $k$ and $p$ are positive integers with $1 \leq k \leq r$, then the blow-up $\HH^p_{\overbar{A}_k}$ is also a reflection hypergraph.
\end{lemma}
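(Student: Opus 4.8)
The plan is to exhibit the blow-up $\HH^p_{\overbar{A}_k}$ directly as a reflection hypergraph by enlarging the ambient reflection group. Suppose $\HH$ is isomorphic to the $(T_1,\dots,T_r;T,W)$-reflection hypergraph. The blow-up replaces the vertices in $A_k$ (the single part whose copies are \emph{not} glued) with $p$ vertex-disjoint copies of $A_k$, while keeping all other parts $A_i$, $i\neq k$, identified across copies. Thus the new hypergraph should be realised over a group $W'$ that, roughly, is $W$ times an extra $S_p$-like factor that only acts on the $k$-th coordinate. Concretely, I would take $W' = W \times S_p$ with simple reflection set $T' = T \cup \{s_{1},\dots,s_{p-1}\}$, where the $s_j$ are the adjacent transpositions generating $S_p$, and set
\[
T_k' = T_k, \qquad T_i' = T_i \cup \{s_1,\dots,s_{p-1}\} \text{ for } i \neq k.
\]
The point is that the subgroup $W_i'$ generated by $T_i'$ for $i\neq k$ is $W_i \times S_p$, so its cosets in $W\times S_p$ are in bijection with the cosets of $W_i$ in $W$ — exactly as before, reflecting the fact that those parts are not blown up — while the subgroup $W_k'$ generated by $T_k' = T_k$ is just $W_k \leq W \leq W\times S_p$, whose cosets in $W\times S_p$ are $p$ disjoint copies (indexed by $S_p / \{e\}$, i.e.\ by all of $S_p$, but grouped appropriately) of the cosets of $W_k$ in $W$. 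One has to be slightly careful that the cosets of $W_k'$ number $|W|\cdot p!/|W_k|$ rather than $p \cdot |W|/|W_k|$; this is the standard artifact that blowing up by $p$ and by $p!$ give isomorphic-looking statements at the level of which hypergraphs arise, and it can be absorbed — either by noting that $\HH^p_{\overbar{A}_k}$ embeds as an induced sub-hypergraph whose edges are exactly the $p!/p \cdot 1$... or, more cleanly, by replacing $S_p$ with a transitive group of order exactly $p$ acting on $p$ points when such exists, or else by checking that the extra multiplicity of each coset of $W_k$ simply produces extra disjoint copies of $A_k$, of which we retain $p$ (the construction still yields \emph{a} reflection hypergraph isomorphic to the blow-up up to deleting redundant parallel parts). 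I would follow the model already used in the appendix for passing from $\GG_\alpha$ to $\HH_\alpha$, where exactly this device — adjoining new adjacent transpositions $s_{j(j+1)}$ to enlarge the number of copies of a coset — is carried out.

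The key steps, in order, are: (1) write $\HH$ as $(T_1,\dots,T_r;T,W)$-reflection hypergraph and fix the part $A_k$ to be blown up; (2) define $W' = W \times S_p$, $T' = T \sqcup \{s_1,\dots,s_{p-1}\}$, and the amended subsets $T_i'$ as above; (3) verify that $T'$ is a set of simple reflections for $W'$ — this is immediate since simple reflections of a direct product are the union of simple reflections of the factors, and adjacent transpositions are the standard Coxeter generators of $S_p$; (4) compute the cosets of each $W_i'$: for $i\neq k$ they correspond bijectively to cosets $wW_i$ (since the $S_p$ factor is entirely absorbed into $W_i'$), and for $i=k$ they correspond to pairs (coset $wW_k$, element of $S_p$), i.e.\ to $p$-indexed copies of the original $k$-th part; (5) identify the edges: an edge of the $(T_1',\dots,T_r';T',W')$-reflection hypergraph is $(w'W_1',\dots,w'W_r')$ for $w'=(w,\tau)\in W\times S_p$, which in the coordinates $i\neq k$ is $w W_i$ (forgetting $\tau$) and in coordinate $k$ is the $\tau$-th copy of $wW_k$ — precisely the edge of $\HH$ corresponding to $w$, sitting in the $\tau$-th copy of $A_k$; (6) conclude that this is exactly $\HH^p_{\overbar{A}_k}$ (possibly after discarding redundant duplicate parts, per the multiplicity remark above), hence a reflection hypergraph.

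The main obstacle I anticipate is step (6), or rather the bookkeeping around the coset-multiplicity issue: with $W' = W\times S_p$ the $k$-th part acquires $p!$-fold rather than $p$-fold multiplicity relative to its original size, and one must argue cleanly that the resulting reflection hypergraph is genuinely isomorphic to the $p$-blow-up and not to some larger blow-up. There are two clean fixes. The first is to observe that the definition of $\HH^{p}_{\overbar{A}_k}$ only cares about how many vertex-disjoint copies of the single part $A_k$ we take, and that the set of edges of the reflection hypergraph constructed above decomposes into $p!$ parallel "slabs" each isomorphic to $\HH$ and glued along $\overbar{A}_k$ — so it literally is $\HH^{p!}_{\overbar{A}_k}$, and composing with the (trivially true) statement that if $\HH^{q}_{\overbar{A}_k}$ is a reflection hypergraph then so is $\HH^{p}_{\overbar{A}_k}$ for any $p \le q$ by restricting to $p$ of the slabs (which is again visibly a reflection hypergraph on the sub-collection of cosets) is not quite legitimate since an induced sub-hypergraph of a reflection hypergraph need not be a reflection hypergraph. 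So the honest fix is the second: when $p!/p \ge 2$ is inconvenient, note that for \emph{the purpose of applying this lemma within the paper} — namely realising $\HH_\alpha$, where each $\beta_k$-fold multiplicity arises — what is actually needed is multiplicity exactly $\beta_k$, and the appendix already handles this by adjoining the specific chain-extending reflections $s_{j(j+1)}$ rather than a full $S_p$; I would therefore phrase the lemma's proof to mirror that construction, adjoining only enough adjacent transpositions $s_{r+1},\dots,s_{r+p-1}$ on a disjoint index block so that the subgroup generated by $T_k'$ has index exactly $p$ times larger, which is achieved by putting those new reflections into \emph{all} of $T', T_i'$ for $i\neq k$ \emph{and into} $T_k'$ except for one, exactly as $s_{r(r+1)}$ is handled in the $\HH_\alpha$ construction. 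This reduces the lemma to the computation already performed in the appendix and sidesteps the $p!$ versus $p$ discrepancy entirely.
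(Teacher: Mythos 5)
Your proposal ultimately lands on the paper's own construction. You first try $T_k' = T_k$ with $W'=W\times S_p$, correctly diagnose that this yields $p!$-fold rather than $p$-fold multiplicity in the $k$-th part, correctly reject the "restrict to $p$ slabs" repair (an induced sub-hypergraph of a reflection hypergraph need not be one), and then arrive at the right fix: adjoin the $p-1$ new adjacent transpositions to \emph{all} the $T_i'$, and to $T_k'$ as well except for one of them. This is exactly the paper's choice $T_k' = T_k\cup P\setminus\{\sigma_{12}\}$, so that $W_k' \cong W_k\times S_{[2,p]}$ has index $p$ (not $p!$) in $W_k\times S_p$, giving $p$ copies of each coset of $W_k$ and hence precisely $\HH^p_{\overbar{A}_k}$. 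Your "honest fix" is the paper's proof; the only difference is that the paper goes there directly, while you arrive after a detour through the over-counting pitfall. The remaining steps you outline (verifying $T'$ is a set of simple reflections of $W'\cong W\times S_p$, identifying cosets of $W_i'$ for $i\neq k$ with cosets of $W_i$, and matching edges via a homomorphism that is injective on $\bigcup_{i\neq k}A_i'$ and $p$-to-$1$ on $A_k'$) are exactly what the paper checks.
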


\begin{proof}
Suppose that $\HH$ is isomorphic to the $(T_1,T_2,\cdots,T_r;T,W)$-reflection hypergraph, where $W$ is a finite reflection group, $T$ is a set of simple reflections in $W$ and $T_i\subseteq T$, $i=1,2,\cdots,r$.
Let $P=\{\sigma_{12},\sigma_{23},\cdots,\sigma_{(p-1)p}\}$ be a set of $p-1$ new reflections such that each element of $P$ commutes with all the elements in $W$ and $P$ generates a group which is isomorphic to $S_p$,
where $\sigma_{ij}$ corresponds to the permutation swapping $i$ and $j$ in $[p]$.
Let $T':=T\cup P$, $T_i':=T_i\cup P$ for $i\neq k$, $T_k':=T_k\cup P\setminus\{\sigma_{12}\}$, and let $W'$ be
the new reflection group generated by $T'$, i.e., $W'\cong W\times S_{p}$.

Let $\HH'$ be the $(T_1',\cdots,T_r';T',W')$-reflection hypergraph.
We claim that $\HH'$ is isomorphic to $\HH_{\overbar{A}_k}^p$.
Let $A_i':=\{wW_i':w\in W\}$. Then $\HH'$ is an $r$-graph on the new $r$-partition $A_1'\cup A_2'\cup \cdots\cup A_r'$.
Moreover, $A_i'=A_i \times S_p$ for all $i\neq k$ since $T_i'$ contains all the new reflections in $T$,
whereas $A_k'=\cup_{j=1}^{p} (A_k \times \sigma_{1j} S_{[2,p]})$.
Let $\phi:V(\HH')\rightarrow V(\HH)$
be the map defined by 
\[
\phi(wW_i \times S_p):= wW_i \text{ for each } i\neq k ~\text{ and }~
\phi(wW_k \times \sigma_{1j} S_{[2,p]}):= wW_k \text{ for each } 1 \leq j \leq p.
\]
One may check that $\phi$ is a surjective hypergraph homomorphism from $\HH'$ to $\HH$.
Moreover, it is injective on $\cup_{i\neq k} A_i'$, $p$-to-1 on $A_k'$, and maps every non-edge in $\HH'$ to a non-edge in $\HH$.
Therefore, $\HH'$ is obtained by gluing $p$ vertex-disjoint copies of the $(T_1,\cdots,T_r;T,W)$-reflection hypergraph
along the vertices that are not cosets of $W_k$
and, hence, it is isomorphic to $\HH_{\overbar{A}_k}^p$.
\end{proof}

\begin{proof}[Proof of Lemma~\ref{lem:norming}]
Let $\HH$ be the hypergraph $\HH_\alpha$ with $\beta_k\in\{0,1\}$ for all $k$.
By applying the lemma above with this $\HH$,
one may blow-up each vertex in the vertex set $V_k$ of $\HH$ to $\beta_k$ copies.
Thus, by repeating this blowing-up process, we conclude that $\HH_{\alpha}$ is a reflection hypergraph for any given 
non-negative integers $\beta_1,\beta_2,\cdots,\beta_r$.
\end{proof}


%

\begin{dajauthors}
\begin{authorinfo}[dconlon]
    David Conlon\\
    Department of Mathematics\\
    California Institute of Technology\\
    Pasadena, CA 91125\\
    USA\\
    dconlon\imageat{}caltech\imagedot{}edu
\end{authorinfo}
\begin{authorinfo}[jlee]
  Joonkyung Lee\\
  Department of Mathematics\\
  University College London\\
  London WC1E 6BT\\ 
  United Kingdom\\
  joonkyung.lee\imageat{}ucl\imagedot{}ac\imagedot{}uk
\end{authorinfo}
\end{dajauthors}

\end{document}